\theoremstyle{plain}
\newcounter{ssec}[section]
\newtheorem{cor}[ssec]{Corollary}
\newtheorem{lem}[ssec]{Lemma}
\newtheorem{thm}{Theorem}
\newtheorem{prop}[ssec]{Proposition}
\newtheorem{notation}[ssec]{Notations}
\theoremstyle{definition}
\theoremstyle{remark}
\newcounter{srem}[section]
\newtheorem{rem}[srem]{Remark}
\newcommand{\Z}{\mathbb{Z}}
\newcommand{\lp}{_{(p)}}
\newcommand{\Zp}{\mathbb{Z}_{(p)}}
\newcommand{\opn}{\operatorname}
\numberwithin{equation}{section}
\title[On the $p$-primary subgroups of $H^*(BPU_n)$]{On The $p$-primary subgroups of the cohomology of the classifying spaces of $PU_n$}
\author[Z. Zhang]{Zhilei Zhang}
\address{Department of Mathematics, Nankai University, No.94 Weijin Road, Tianjin 300071, P. R. China}
\email{15829207515@163.com}
\author[L. Zhong]{Linan Zhong$^*$}
\address{Department of Mathematics, Yanbian University, Yanji 133000, P. R. China}
\email{lnzhong@ybu.edu.cn}
\subjclass[2020]{55T10, 55R35, 55R40}
\keywords{Serre spectral sequences, classifying spaces, projective unitary groups.}
\thanks{Supported by National Natural Science Foundation of China (Grant No. \textcolor{blue}{12261091;12001474}).}
\begin{document}

\maketitle

\begin{abstract}
    Let $PU_n$ denote the projective unitary group of rank $n$, and let $BPU_n$ be its classifying space. We extend our
    previous results to a description of $H^s(BPU_n;\Z)\lp$ for $s<2p+9$ by showing that $p$-primary subgroups of $H^s(BPU_n;\Z)$ is $\Z/p$ for $s=2p+5$ and are trivial for $s = 2p+7$ and $s = 2p+8$, where $p$ is an odd 
    prime.
\end{abstract}

\section{Introduction}
Let $U_n$ denote the group of $n\times n$ unitary matrices. The projective
unitary group $PU_n$ is defined as the quotient group of $U_n$ by $S^1$,
with the identification of $S^1$ as the normal subgroup of scalar matrices
of $U_n$. Let $BPU_n$ denote the classifying space of $PU_n$.

The cohomology of $BPU_n$ plays significant roles in the study of the topological period-index problem, such as \cite{antieau2014period}, \cite{antieau2014topological}, \cite{gu2019topological}, and \cite{gu2020topological}.
It's also crucial in the study of anomalies in particle 
physics, such as \cite{cordova2020anomalies}, \cite{garcia2019dai}. Other related works include 
\cite{duan}, which fully determined the integral cohomology
of $PU_n$ and \cite{crowley2021h}, which studied the image of the canonical
map $H^* (BPU_n ;\Z)\rightarrow H^* (BU_n ;\Z)$.

The  cohomology of $BPU_n$ for special $n$ has been
studied in various works, such as Kameko-Yagita \cite{Kameko2008brown}, Kono-Mimura \cite{kono1975cohomology}, Kono-Yagita \cite{Kono_Yagita}, Toda \cite{toda1987cohomology}, and Vavpeti{\v{c}}-Viruel \cite{vavpetivc2005mod}. 

None of the works above studied $H^* (BPU_n ;\Z)$ for arbitrary $n$. However, in \cite{gu2019cohomology}, Gu made a breakthrough
which determined the ring structure of $H^*(BPU_n;\Z)$ in dimensions less than or equal to $10$ for any $n$.

\begin{notation}
Let $H^* (-)$ denote the integral cohomology $H^*(-;\Z)$.  Given an abelian group $A$ and a prime number $p$, let $A\lp$ denote the localization of $A$ at $p$, and $_pA$ denote the $p$-primary subgroup of $A$.  In other words,
$_pA$ is the subgroup of $A$ consisting of all torsion elements whose order is a power of $p$. One useful observation is that there exists a canonical isomorphism $_pH^*(-)\cong {_p[H^*(-)\lp]}$. Lastly, note that when we take tensor products of $\Z\lp$-modules, we do so over $\Z\lp$.
\end{notation}

In the following discussion, we outline our strategy for studying $H^*(BPU_n)$ for arbitrary $n$. Firstly, it is worth noting that the torsion-free component of $H^*(BPU_n)$ is already thoroughly understood. In addition, the $p$-primary subgroup of $H^*(BPU_n)$ is trivial for $p\nmid n$ from
the following results.

\begin{prop}[\cite{gu_zzz}, Proposition 1.1]
\label{pro:n-torsion}
    Suppose $x\in H^*(BPU_n)$ is a torsion class. Then there exists some $i\geq 0$ such that $n^ix = 0$.
\end{prop}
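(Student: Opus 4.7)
The plan is to use the Serre spectral sequence of a fibration derived from the central extension
$$1 \to \mu_n \to SU_n \to PU_n \to 1,$$
where $\mu_n \subset SU_n$ is the central subgroup of scalar matrices $\zeta I$ with $\zeta^n = 1$. Applying the classifying-space functor and extending one step to the right in the Puppe sequence produces the fibration $BSU_n \to BPU_n \to K(\Z/n,2)$, to which I attach its cohomological Serre spectral sequence
$$E_2^{p,q} = H^p\bigl(K(\Z/n,2);\, H^q(BSU_n)\bigr) \Longrightarrow H^{p+q}(BPU_n).$$

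Because $H^*(BSU_n) = \Z[c_2,\ldots,c_n]$ is torsion-free and $K(\Z/n,2)$ is simply connected, the local coefficient system is trivial and the universal coefficient theorem gives $E_2^{p,q} \cong H^p(K(\Z/n,2);\Z) \otimes H^q(BSU_n)$. The crucial input is that, for $p > 0$, the group $H^p(K(\Z/n,2);\Z)$ is finite and annihilated by a power of $n$: $K(\Z/n,2)$ has finite type and becomes contractible after localization at any prime $\ell\nmid n$, since $\Z/n \otimes \Z_{(\ell)} = 0$ whenever $\ell \nmid n$. Consequently every $E_2^{p,q}$ with $p > 0$ is killed by a fixed power of $n$, and hence so is every $E_\infty^{p,q}$ with $p > 0$.

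Given this, I inspect the induced filtration $H^k(BPU_n) = F^0 \supseteq F^1 \supseteq \cdots \supseteq F^{k+1} = 0$. The top quotient $F^0/F^1 = E_\infty^{0,k}$ embeds into the torsion-free group $E_2^{0,k} = H^k(BSU_n)$, so any torsion class $x \in H^k(BPU_n)$ must already lie in $F^1$. Since each $F^p/F^{p+1}$ with $p \geq 1$ is killed by some $n^{i_p}$ and the filtration has only finitely many non-zero steps, a downward induction shows that $n^{i_1+\cdots+i_k}$ annihilates all of $F^1$, and in particular kills $x$.

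The argument is structurally routine; the only delicate point is the input on $H^*(K(\Z/n,2);\Z)$, which I would verify either by the localization argument above or, if one prefers to avoid localization, by inducting up the path-loop fibration starting from the well-known cohomology of $B\mu_n = K(\Z/n,1)$.
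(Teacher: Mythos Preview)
The paper does not supply its own proof of this proposition; it is quoted as Proposition~1.1 of the cited reference and used as a black box, so there is no in-paper argument to compare against.

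Your proof is correct. The fibration $BSU_n \to BPU_n \to K(\Z/n,2)$ is the natural tool here: since $H^*(BSU_n;\Z)$ is free and the reduced integral cohomology of $K(\Z/n,2)$ consists of finite groups annihilated by powers of $n$ (your localization argument is the clean way to see this), the filtration argument goes through. The one phrase I would tighten is ``killed by a \emph{fixed} power of $n$'': the exponent may in principle grow with the column index $p$, but since a class in a given total degree involves only finitely many filtration quotients this causes no problem, and you handle it correctly in the last paragraph.

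It is worth contrasting your choice of fibration with the one the paper uses for its actual computations, namely $BU_n \to BPU_n \to K(\Z,3)$. That sequence is better adapted to computing individual groups (the differentials compare well with the auxiliary spectral sequences $^T\!E$ and $^K\!E$), but it is ill-suited to the present statement because $H^*(K(\Z,3);\Z)$ contains $p$-torsion for \emph{every} prime $p$, so the $E_2$-page already carries torsion unrelated to $n$. Your $K(\Z/n,2)$ fibration sidesteps this entirely and gives the shortest route to the proposition.
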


Therefore, to determine the graded abelian group structure of $H^s(BPU_n)$, it suffices to consider the $p$-primary subgroup $_pH^s(BPU_n)$ for $p\mid n$. 

In \cite{gu_zzz} and \cite{ZHANG2023108642}, we gived a comprehensive description of $H^s(BPU_n;\Z)\lp$ for $s<2p+5$ and $s=2p+6$ by proving that $_pH^s(BPU_n) = 0$ for $s = 2p+3,\ 2p+4$ and $2p+6$ (when $p$ is an odd prime). In addition, we listed the reason why the previous methods failed in determining $_pH^{2p+5} (BPU_n)$ at the end of \cite{ZHANG2023108642}.

In this paper, we improve our previous
methods and then extend the findings in \cite{gu_zzz} 
and \cite{ZHANG2023108642} to the range
$s<2p+9$
by computing $_pH^s(BPU_n)$ for $s = 2p+5,\ 2p+7$ and $ 2p+8$ for all $n$. Speak roughly, the difficulty at the end of \cite{ZHANG2023108642} is solved by 
an use of a new basis(Lemma \ref{lem:c bar = c}) and a technical
construction(Lemma \ref{lem:three boundaries}).
Our main theorem is as follows. 

\begin{thm}\label{thm: p>2, main thm, 2p+5,7,8 }
Let $p > 2$ be a prime number, and $n=p^rm$ for a positive integer $m$ co-prime to $p$. Then the $p$-primary subgroup of $H^{s}(BPU_n)$ in dimensions less than $2p+9$ is as follows:
\begin{enumerate}
    \item For $r > 0$, we have 
    \begin{equation*}
    _pH^s(BPU_n)\cong
    \begin{cases}
        \Z/p^r,\ s=3,\\
        \Z/p,\ s=2p+2 \text{ or } 2p+5, \\
        0,\  s<2p+9,\ s\neq 3, 2p+2, 2p+5.
    \end{cases}
    \end{equation*}
    \item For $r = 0$, we simply have $_pH^s(BPU_n) = 0$ for all $s\geq 0$. 
\end{enumerate}
\end{thm}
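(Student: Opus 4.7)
The natural tool is the $p$-local Serre spectral sequence of the fibration
$$BU_n \longrightarrow BPU_n \longrightarrow K(\Z,3),$$
which arises from the central extension $1\to S^1\to U_n\to PU_n\to 1$. Its $E_2$-page reads
$$E_2^{s,t}=H^s\bigl(K(\Z,3);\,H^t(BU_n)\bigr)\lp\;\Longrightarrow\;H^{s+t}(BPU_n)\lp.$$
Part $(2)$ is immediate from Proposition \ref{pro:n-torsion}: any $p$-primary class must also be killed by a power of $n$, which forces it to vanish when $\gcd(n,p)=1$. So the plan is to assume $r>0$ and analyze the $p$-local $E_2$-page through total degree $2p+8$.

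In this range, and for $p$ sufficiently large, $H^*(K(\Z,3))\lp$ is generated by the fundamental class $x\in H^3$ (free of rank one $p$-locally) together with the order-$p$ Bockstein class $y=\beta\mathcal{P}^1\bar x\in H^{2p+2}$, with a handful of extra low-dimensional classes appearing when $p$ is small that must be handled separately. The fiber cohomology is the polynomial ring $\Z\lp[c_1,\dots,c_n]$. Starting from the basic transgression $d_3(c_1)=nx$, the derivation property and Kudo's transgression theorem yield explicit formulas for $d_3$ and $d_{2p+2}$ on Chern-class monomials in the relevant range; these are the formulas already exploited in \cite{gu_zzz} and \cite{ZHANG2023108642} to settle $s<2p+5$ and $s=2p+6$.

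The crucial new input concerns total degree $s=2p+5$: classes arising from $E_2^{2p+2,3}$ and $E_2^{3,2p+2}$ together with any Tor contributions produce potential survivors that the standard choice of generators cannot cleanly account for. This is precisely the obstruction flagged at the end of \cite{ZHANG2023108642}. Following the hints in the introduction, my strategy is to replace the Chern-class basis by the new basis afforded by Lemma \ref{lem:c bar = c}, in which the image of $d_{2p+2}$ becomes transparent, and then to apply the three-boundary construction of Lemma \ref{lem:three boundaries} to produce the extra explicit coboundaries needed to reduce the cokernel of $d_{2p+2}$ in total degree $2p+5$ to a single copy of $\Z/p$.

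Once this apparatus is in place, the cases $s=2p+7$ and $s=2p+8$ follow by a parallel enumeration: list the $E_2$-candidates in each bidegree contributing to the given total degree, apply the transgression formulas and the new-basis $d_{2p+2}$, and verify that no class survives. The main obstacle is unambiguously the $s=2p+5$ step, since $s=2p+7$ and $s=2p+8$ are essentially downstream corollaries of the same technology. The proof closes by noting that for $s=2p+5$ there is no extension problem to resolve: no other $p$-primary torsion can mix with the surviving $\Z/p$ in that total degree, so the abelian-group structure is as stated.
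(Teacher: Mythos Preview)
Your overall architecture matches the paper's: the same Serre spectral sequence, the same reduction of part~(2) to Proposition~\ref{pro:n-torsion}, and the same reliance on Lemmas~\ref{lem:c bar = c} and~\ref{lem:three boundaries}. But several points are garbled, and one genuine step is missing.

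First, the bookkeeping is off. The bidegree $E_2^{2p+2,3}$ you name is zero since $H^{\mathrm{odd}}(BU_n)=0$; the two relevant entries in total degree $2p+5$ are $E_2^{3,2p+2}$ and $E_2^{2p+5,0}$. More importantly, the higher differential in play is $d_{2p-1}$, not $d_{2p+2}$, and Lemmas~\ref{lem:c bar = c} and~\ref{lem:three boundaries} are statements about the image of $d_3$, not of a higher differential. In the paper the two lemmas are used to show that ${^U\!E}_4^{3,2p+2}$ and ${^U\!E}_4^{3,2p+4}$ are quotients of a single $\Z/p$, and then an explicit $d_{2p-1}$ computation (via comparison with $^T\!E$) kills those classes, giving ${^U\!E}_\infty^{3,2p+2}={^U\!E}_\infty^{3,2p+4}=0$. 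So the lemmas do not ``reduce the cokernel to a single $\Z/p$'' that survives; they set up the vanishing of the $s=3$ column.

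The actual surviving $\Z/p$ in degree $2p+5$ lives in $E_\infty^{2p+5,0}$ and is represented by the base class $x_1 y_{p,0}\in H^{2p+5}(K(\Z,3))$. Here is the real gap in your plan: the spectral sequence by itself only tells you $E_\infty^{2p+5,0}$ is $\Z/p$ \emph{or} $0$, since $x_1 y_{p,0}$ could in principle be hit by a differential. Your last paragraph asserts there is ``no extension problem,'' but the issue is a non-vanishing problem, and you give no mechanism for it. The paper resolves this with an external argument: Vistoli's computation gives ${_p}H^{2p+5}(BPU_p)=\Z/p$, and the block-diagonal inclusion $U_p\hookrightarrow U_n$ induces a map $\Delta'\colon BPU_p\to BPU_n$ over $K(\Z,3)$, so $(\Delta')^*$ is the identity on the bottom row of the $E_2$-pages and carries $x_1 y_{p,0}$ to a nonzero class. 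Without this (or an equivalent detection argument) your proof of the $s=2p+5$ case is incomplete.
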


\begin{rem}
    When $p=2\mid n$, the results are
    quite different: $_pH^{2p+5}(BPU_n)={_p}H^{2p+8}(BPU_n)=\Z/p$. $_pH^{2p+7}(BPU_n)=\Z/p\oplus \Z/p$ or $\Z/p$,
    depending on whether $n\equiv 4
    (\opn{mod} 16)$.
\end{rem}

\subsection*{Organization of the paper}
In Section \ref{sec:spectral_sequence}, we introduce the Serre spectral sequence that we use to compute the cohomology of $BPU_n$. 
In Section \ref{sec: proof of lem}, we prove  two necessary lemmas. 
In Section \ref{sec: The proof of thm 1}, we 
complete the proof 
Theorem \ref{thm: p>2, main thm, 2p+5,7,8 }.

\section{The spectral sequences}\label{sec:spectral_sequence}

Our tool to compute the cohomology of $BPU_n$ is the Serre spectral sequence ${^U}\!E$ described in equation \eqref{eq: E2 of U convergence}. The same spectral sequence ${^U}\!E$ was also essential in the related computations presented in \cite{gu2019cohomology,gu_zzz,ZHANG2023108642}. This section serves to refresh the basic framework and computational outcomes for ${^U}\!E$.

\subsection{The Serre spectral sequence $^U\! E$}

The short exact sequence of Lie groups
$$1 \to S^1 \to U_n \to PU_n \to 1$$
induces a fiber sequence of their classifying spaces
$$BS^1 \to BU_n \to BPU_n$$
Notice that $BS^1$ has the homotopy type of the Eilenberg-Mac Lane space $K(\Z, 2)$, there is an associated fiber sequence
\begin{equation}\label{eq: fiber sequence U}
U: ~ BU_n \to BPU_n \to K(\Z, 3)
\end{equation}

We will use the Serre spectral sequence associated to \eqref{eq: fiber sequence U} to compute the cohomology of $BPU_n$.
For notational convenience, we denote this spectral sequence by $^U\!E$. The $E_2$ page of
$^U\!E$ has the form
\begin{equation}\label{eq: E2 of U convergence}
^U\! E^{s, t}_{2} = H^{s}(K(\mathbb{Z},3);H^{t}(BU_{n})) \Longrightarrow H^{s+t}(BPU_{n})
\end{equation}
To carry out actual computations with this spectral sequence, we need to know the cohomology of $K(\mathbb{Z},3)$ and $BU_{n}$.  Since the purpose of this paper is to study the $p$-primary subgroup of $H^*(BPU_n)$ for a fixed prime $p$, it suffices to know the $p$-local cohomology of $K(\Z, 3)$.

We summarize the $p$-local cohomology of $K(\Z, 3)$ in low dimensions as follows.  The original reference is \cite{cartan19551955}, also see \cite{tamanoi1999subalgebras} for a nice treatment.

\begin{prop}\label{prop: p local cohomology of KZ3 below 2p+5}
Let $p > 2$ be a prime. In degrees up to $2p+8$, we have
\begin{equation}\label{equation: p local cohomology of KZ3 below 2p+5}
H^{s}(K(\mathbb{Z},3))_{(p)} =
\begin{cases}
\Zp, & s = 0,\ 3,\\
\Z/p, &  s = 2p+2,\ 2p+5,\\
0, & s \leq 2p+8,  s \neq 0,\ 3,\ 2p+2,\ 2p+5.
\end{cases}
\end{equation}
where $x_1,~ y_{p, 0},~ x_1 y_{p, 0}$ are generators on degree $3, 2p+2, 2p+5$ respectively.
\end{prop}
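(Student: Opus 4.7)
The plan is to derive this from Cartan's classical computation of $H^*(K(\mathbb{Z}, n); \mathbb{F}_p)$, which the statement attributes to \cite{cartan19551955} and \cite{tamanoi1999subalgebras}. For a self-contained verification in this low-degree range, I would run the cohomological Serre spectral sequence of the path-loop fibration
\begin{equation*}
K(\mathbb{Z}, 2) \to PK(\mathbb{Z}, 3) \to K(\mathbb{Z}, 3),
\end{equation*}
whose contractible total space forces convergence to the cohomology of a point. The $E_2$-page is $H^s(K(\mathbb{Z}, 3); H^t(K(\mathbb{Z}, 2)))$ with $H^*(K(\mathbb{Z}, 2); \mathbb{Z}) = \mathbb{Z}[u]$, $|u| = 2$, and the transgression $d_3(u) = x_1$ identifies the fundamental class in degree $3$.

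By the Leibniz rule, $d_3(u^k) = k u^{k-1} x_1$. Localized at an odd prime $p$, this kills the class $u^{k-1} x_1$ whenever $p \nmid k$, while $u^{p-1} x_1$ only survives to $E_4$ as a $\mathbb{Z}/p$-class (its image under $d_3$ is $0$ since $x_1^2 = 0$ in $p$-local graded-commutative cohomology for $p$ odd). For this surviving class to eventually die, a new $p$-torsion generator must appear on the base in degree $2p+2$. Kudo's transgression theorem identifies its mod-$p$ reduction with $\beta P^1 x_1$, producing an integral class $y_{p, 0} \in H^{2p+2}(K(\mathbb{Z}, 3))_{(p)}$ of order exactly $p$. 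Multiplicativity then yields $x_1 y_{p, 0} \in H^{2p+5}(K(\mathbb{Z}, 3))_{(p)}$, whose nonvanishing follows from the fact that $x_1$ and $\beta P^1 x_1$ are independent generators of the free graded-commutative algebra $H^*(K(\mathbb{Z}, 3); \mathbb{F}_p)$.

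To finish, I would rule out additional $p$-local torsion in the range $s \le 2p+8$. The next transgression generator after $\tau(u^p)$ comes from $u^{p^2}$ and first contributes in degree $2p^2 + 2$, well beyond the stated range. The main obstacle is the bookkeeping: verifying that no stray torsion occurs in degrees $2p+3$, $2p+4$, $2p+6$, $2p+7$, and $2p+8$. Concretely, one needs to confirm that every potential surviving bidegree on $E_\infty$ in these total degrees is either zero on the $E_2$-page or cleanly killed by a $d_3$ (from a Leibniz computation) or by later differentials matching a class from the fiber. Appealing to Cartan's explicit list of polynomial/exterior generators shortens this verification: no admissible Steenrod monomial on $x_1$ has total degree in those excluded values within the stated range, and the same conclusion then lifts to integral $p$-local cohomology via the Bockstein spectral sequence.
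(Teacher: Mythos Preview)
The paper does not supply its own proof of this proposition; it simply records the result and cites Cartan \cite{cartan19551955} and Tamanoi \cite{tamanoi1999subalgebras}. Your sketch via the path--loop fibration is precisely the classical route those references take, and the outline is sound: the transgression $d_3(u)=x_1$, the Leibniz formula $d_3(u^k)=ku^{k-1}x_1$, the appearance of a $\Z/p$ in degree $2p+2$ via Kudo's theorem, and the multiplicative class $x_1 y_{p,0}$ are all standard. Note also that the spectral sequence you describe is exactly the one the paper introduces as $^K\!E$ in \eqref{eq: fiber sequence K}; the differentials you compute are the $r=3$ and $r=2p-1$ cases of Proposition~\ref{prop:diff0}, which the paper quotes from \cite{gu2019cohomology} rather than re-deriving.

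One minor tightening: your claim that $x_1 y_{p,0}\neq 0$ is most cleanly justified not by ``independence of generators'' in $H^*(K(\Z,3);\Fp)$ but by the same spectral-sequence bookkeeping---the class $u^{p-1}x_1\cdot x_1$ vanishes already on $E_4$ (since $x_1^2=0$ $p$-locally for $p$ odd), so the $\Z/p$ in bidegree $(3,2p-2)$ must hit a nonzero target in $(2p+2,0)$, and then the product structure on the base forces a nonzero class in degree $2p+5$ because $u\cdot y_{p,0}$ at $(2p+2,2)$ must eventually die via $d_3$ to $x_1 y_{p,0}$. The remaining vanishing in degrees $2p+3,2p+4,2p+6,2p+7,2p+8$ is, as you say, bookkeeping that Cartan's tables settle.
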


\begin{rem}
Here the notations for the generators are taken from \cite[Proposition 2.14]{gu2019cohomology}.
\end{rem}

Also recall
\begin{equation}\label{equation: cohomology of BUn}
    H^{*}(BU_{n}) = \mathbb{Z}[c_1,c_2,\dots,c_n],\ |c_i|=2i
\end{equation}
In particular, $H^{*}(BU_{n})$ is torsion-free.  We have
\begin{equation}\label{equation: tensor form E2 of U}
    ^U\! E^{s, t}_{2} \cong H^{s}(K(\mathbb{Z},3)) \otimes H^{t}(BU_{n})
\end{equation}

\subsection{The auxiliary spectral sequences $^T\! E$ and $^K\! E$}

Instead of computing the differentials in $^U\!E$ directly, which is difficult in practice, our strategy is to compare $^U\!E$ with two auxiliary spectral sequences, which has simpler differential behaviors.  We now introduce the two auxiliary fiber sequences and their associated Serre spectral sequences.

Let $T^n$ be the maximal torus of $U_n$ with the inclusion denoted by
\[\psi: T^n\to U_n.\]
Passing to quotients over $S^1$, we have another inclusion of maximal torus
\[\psi': PT^n\to PU_n.\]
The quotient map $T^n\to PT^n$ fits into an exact sequence of Lie groups
\[1\to S^1\to T^n\to PT^n\to 1,\]
which induces another fiber sequence of their classifying spaces
\begin{equation}\label{eq: fiber sequence T}
T: ~ BT^n \to BPT^n \to K(\mathbb{Z}, 3)
\end{equation}
$T$ is our first auxiliary fiber sequence.

We also consider the path fibration for $K(\Z,3)$
\begin{equation}\label{eq: fiber sequence K}
K: ~ K(\mathbb{Z}, 2) \simeq BS^1 \to * \to K(\mathbb{Z}, 3)
\end{equation}
where $*$ denotes a contractible space. $K$ is our second auxiliary fiber sequence.

These fiber sequences fit into the following homotopy commutative diagram:
\begin{equation}\label{eq:3_by_3_diag}
    \begin{tikzcd}
        K\arrow[d,"\Phi"]:& BS^1\arrow[r]\arrow[d,"B\varphi"]& *\arrow[r]\arrow[d]&
        K(\Z,3)\arrow[d,"="]\\
        T:\arrow[d,"\Psi"]& BT^n\arrow[r]\arrow[d,"B\psi"]& BPT^n\arrow[r]\arrow[d,"B\psi'"]& K(\Z,3)\arrow[d,"="]\\
        U:& BU_n\arrow[r]& BPU_n\arrow[r]& K(\Z,3)
    \end{tikzcd}
\end{equation}
Here, the map $B\varphi: BS^1\to BT^n$ is induced by the diagonal map $\varphi: S^1\to T^n$.

We denote the Serre spectral sequences associated to $U$, $T$, and $K$ as $^U\! E$, $^T\! E$ and $^K\! E$ respectively.  We denote their corresponding differentials by ${^U\!d}_*^{*,*}$, ${^T\!d}_*^{*,*}$, and ${^K\!d}_*^{*,*}$ respectively.  When the actual meaning is clear from the context, we also simply denote the differentials by $d_*^{*,*}$.

In this paper, we compute differentials in $^U\!E$ by comparing them with the differentials in $^T\!E$ and $^K\!E$.  This is possible because: (1) we have explicit formulas for the maps between spectral sequences, and (2) we have a good understanding of the corresponding differentials in $^T\!E$ and $^K\!E$.

We first describe the comparison maps between $^U\!E$, $^T\!E$ and $^K\!E$.

Notice that we have
\begin{equation}\label{equation: cohomology of BTn}
    H^{*}(BT^{n}) = \mathbb{Z}[v_1,v_2,\dots,v_n],\ |v_i|=2.
\end{equation}

The induced homomorphism between cohomology rings is as follows:
\[B\varphi^*:H^*(BT^n) = \Z[v_1,v_2,\cdots,v_n] \to H^*(BS^1) = \Z[v],\ v_i\mapsto v.\]
The map $B\psi: BT^n\to BU_n$ induces the injective ring homomorphism
\begin{equation}\label{equation: cohomology map BUn to BTn}
    \begin{split}
        B\psi^*: H^*(BU_n) = \Z[c_1,\cdots,c_n] &\to H^*(BT^n) = \Z[v_1,\cdots,v_n],\\
        c_i &\mapsto \sigma_i(v_1,\cdots,v_n),
    \end{split}
\end{equation}
where $\sigma_i(t_1,t_2,\cdots,t_n)$ is the $i$th elementary symmetric polynomial in variables $t_1,t_2,\cdots,t_n$:
\begin{equation}\label{eq:sigma_def}
    \begin{split}
        & \sigma_0(t_1,t_2,\cdots,t_n) = 1,\\
        & \sigma_1(t_1,t_2,\cdots,t_n) = t_1+t_2+\cdots+t_n,\\
        & \sigma_2(t_1,t_2,\cdots,t_n) = \sum_{i<j}t_it_j,\\
        & \vdots\\
        & \sigma_n(t_1,t_2,\cdots,t_n) = t_1t_2\cdots t_n.
    \end{split}
\end{equation}

We also recall some important propositions regarding the higher differentials in $^K\!E$ and $^T\!E$.  The following result of differentials in $^K\!E$ is the starting point for relevant computations in $^T\!E$ and $^U\!E$.

\begin{prop}[\cite{gu2019cohomology}, Corollary 2.16]\label{prop:diff0}
 The higher differentials of ${^K\!E}_{*}^{*,*}$ satisfy
 \begin{equation*}
 \begin{split}
   &d_{3}(v)=x_1,\\
   &d_{2p^{k+1}-1}(p^k x_{1}v^{lp^{e}-1})=v^{lp^{e}-1-
   (p^{k+1}-1)}y_{p,k},\quad e > 0,\ \operatorname{gcd}(l,p)=1,\\
   &d_{r}(x_1)=d_{r}(y_{p,k})=0,\quad \textrm{for all }r,k>0
 \end{split}
\end{equation*}
and the Leibniz rule. 
\end{prop}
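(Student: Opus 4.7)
The plan is to establish these differentials using three inputs: convergence of the path-fibration spectral sequence to $H^*(\mathrm{pt})$ concentrated in degree zero, the multiplicativity (Leibniz rule) of the Serre spectral sequence, and Cartan's computation of $H^*(K(\Z,3))_{(p)}$, which produces the classes $x_1$ and $y_{p,k}$ in the first place.

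First, the edge cases are formal. The fibre class $v\in H^2(BS^1)$ is transgressive in the path fibration, and since the total space is contractible its transgression must generate $H^3(K(\Z,3))\cong\Z$; by convention one sets $x_1=d_3(v)$. The vanishings $d_r(x_1)=d_r(y_{p,k})=0$ for all $r\geq 2$ are a pure quadrant argument: both classes sit on the base row $E_r^{s,0}$, while $d_r$ from there would land in $E_r^{s+r,\,1-r}$, which is zero because $1-r<0$.

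Using $d_3(v)=x_1$ and Leibniz, one computes $d_3(v^m)=m v^{m-1}x_1$ and, since $x_1^2=0$ $p$-locally at an odd prime, $d_3(v^{m-1}x_1)=0$. Hence $E_4^{0,2m}=0$ for $m>0$, while on the column $s=3$ one has $E_4^{3,2m-2}\cong\Zp/m$, generated by $v^{m-1}x_1$. Classes on this column cannot be hit by any higher differential, since that would require an incoming $d_r$ from $E_r^{3-r,*}$ with $r\geq 4$, i.e.\ from a negative column. Convergence to a point therefore forces every nontrivial class of $E_4^{3,*}$ to support a nonzero higher differential; writing $m=lp^e$ with $\gcd(l,p)=1$ and $e>0$, the element $p^k x_1 v^{m-1}$ has order $p^{e-k}$ in $\Zp/p^e$, consistent with its being killed on page $2p^{k+1}-1$ as the proposition predicts.

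The core assertion, and the main obstacle, is to identify the image of this higher transgression as precisely $v^{lp^e-p^{k+1}}y_{p,k}$. A bidegree count confines the target of $d_{2p^{k+1}-1}$ on $p^k x_1 v^{lp^e-1}$ to $E_{2p^{k+1}-1}^{2p^{k+1}+2,\,2lp^e-2p^{k+1}}$, and Cartan's structure theorem shows that $p$-locally this bidegree is spanned by a single class, a unit multiple of $v^{lp^e-p^{k+1}}y_{p,k}$. To pin the unit down to $1$ one proceeds inductively on $k$: the base case $k=0$ follows from Kudo's integral transgression theorem applied to $d_3(v)=x_1$, which identifies the transgression of $v^p$ with $\beta P^1 x_1$ up to sign, and this is then taken as the definition of $y_{p,0}$; the inductive step iterates this, each $y_{p,k}$ being chosen so that $\beta P^{p^k}$ applied to a transgression at the previous stage produces it on the nose. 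Leibniz then extends the formula from $l=1$ to all $l$ coprime to $p$. The delicate step is carrying Kudo out at integral coefficients; in practice one first works in the mod-$p$ spectral sequence, where Kudo's theorem is cleanest, and then lifts back via the Bockstein long exact sequence.
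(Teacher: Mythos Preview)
The paper does not prove this proposition at all: it is quoted as Corollary~2.16 of \cite{gu2019cohomology}, with only a remark afterwards correcting a typo in the original hypothesis (the condition on the exponent should read $e>k$). There is therefore no argument in the present paper to compare yours against; the result is imported wholesale.

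Your sketch follows the standard route---path-fibration convergence plus Kudo plus the inductive definition of the $y_{p,k}$---and is correct in outline. One small inaccuracy worth flagging: you write that Kudo ``identifies the transgression of $v^p$ with $\beta P^1 x_1$'', but the degrees do not match. In the mod-$p$ spectral sequence $v^p$ sits in fibre degree $2p$, so its transgression (via $d_{2p+1}$) lands in base degree $2p+1$, which is $P^1 x_1$, not $\beta P^1 x_1$. The statement you actually need is Kudo's \emph{secondary} formula: from $d_3(v)=x_1$ one gets
\[
d_{2p-1}\bigl(x_1 v^{p-1}\bigr)\;=\;-\,\beta P^1 x_1
\]
in the mod-$p$ spectral sequence, where $x_1 v^{p-1}$ lies in bidegree $(3,2p-2)$ and the target in $(2p+2,0)$. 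It is this identity that one lifts to the $p$-local integral spectral sequence (where $x_1 v^{p-1}$ survives to $E_4$ as a $p$-torsion class) and then takes as the definition of $y_{p,0}$. With that adjustment your outline matches the argument underlying the cited reference.
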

\begin{rem}
    Note there is a typo in the original reference, where the condition $k \geq e$ should be replaced by $e > k$.
\end{rem}

\begin{prop}[\cite{gu2019cohomology}, Proposition 3.2]\label{pro:diff1}
    The differential  $^{T}\!d_{r}^{*,*}$, is partially determined as follows:
\begin{equation}
 ^{T}\!d_{r}^{*,2t}(v_{i}^{t}\xi)={(B\pi_i)^{*}}({^K\!d}_{r}^{*,2t}(v^{t}\xi)),
\end{equation}
where $\xi\in {^{T}\!E}_{r}^{*,0}$, a quotient group of $H^*(K(\mathbb{Z}, 3))$, and $\pi_i: T^{n}\rightarrow S^1$ is the projection of the $i$th diagonal entry. In plain words, $^{T}\!d_{r}^{*,2t}(v_{i}^{t}\xi)$ is simply $^{K}\!d_{r}^{*,2t}(v^{t}\xi)$ with $v$ replaced by $v_i$.
\end{prop}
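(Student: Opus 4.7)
The plan is to prove the formula by naturality of the Serre spectral sequence applied to a suitable morphism of fiber sequences. The key observation is that the $i$-th coordinate projection $\pi_i : T^n \to S^1$ restricts to the identity on the diagonal subgroup $S^1 \hookrightarrow T^n$, and hence descends to a map $PT^n \to *$, fitting into a commutative diagram of short exact sequences of Lie groups:
\begin{equation*}
\begin{tikzcd}
1 \arrow[r] & S^1 \arrow[r] \arrow[d, "="] & T^n \arrow[r] \arrow[d, "\pi_i"] & PT^n \arrow[r] \arrow[d] & 1 \\
1 \arrow[r] & S^1 \arrow[r] & S^1 \arrow[r] & * \arrow[r] & 1
\end{tikzcd}
\end{equation*}
Passing to classifying spaces yields a morphism from the fiber sequence $T$ to the fiber sequence $K$, and consequently, by functoriality of the Serre construction, a morphism of spectral sequences $\Pi_i : {^K\!E} \to {^T\!E}$ commuting with all differentials.

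The next step is to identify $\Pi_i$ explicitly on the $E_2$ page. On base cohomology it is the identity of $H^*(K(\Z,3))$, since the rightmost vertical arrow of the diagram is the identity; consequently any lift $\tilde\xi \in {^K\!E}_r^{*,0}$ of a class $\xi \in {^T\!E}_r^{*,0}$ is sent to $\xi$ itself. On fiber cohomology it is $(B\pi_i)^* : H^*(BS^1) \to H^*(BT^n)$, which sends $v \mapsto v_i$ and hence $v^t \mapsto v_i^t$. By multiplicativity of $\Pi_i$ (together with the fact that the products under consideration are of fiber classes with base classes, which are well-defined on every page of the spectral sequence), we obtain $\Pi_i(v^t \tilde\xi) = v_i^t \xi$ in ${^T\!E}_r^{*,2t}$.

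The formula then drops out from the commutativity of $\Pi_i$ with $d_r$:
$${^T\!d}_r^{*,2t}(v_i^t \xi) \;=\; {^T\!d}_r^{*,2t}\bigl(\Pi_i(v^t \tilde\xi)\bigr) \;=\; \Pi_i\bigl({^K\!d}_r^{*,2t}(v^t \tilde\xi)\bigr) \;=\; (B\pi_i)^*\bigl({^K\!d}_r^{*,2t}(v^t \xi)\bigr),$$
where the final equality uses the explicit description of $\Pi_i$ as the substitution $v \mapsto v_i$ on fiber generators together with the identity on the base. The only conceptual step is the construction of the morphism of fiber sequences above; after that, the proposition is a formal consequence of naturality and multiplicativity of the Serre spectral sequence. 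The one bookkeeping point to verify is that the right-hand side is independent of the chosen lift $\tilde\xi \in H^*(K(\Z,3))$ of $\xi$, which follows because the indeterminacy in the lift lies in the kernel of the quotient ${^K\!E}_2^{*,0} \twoheadrightarrow {^K\!E}_r^{*,0}$ and is therefore killed once we pass to ${^T\!E}_r^{*,2t}$ via $\Pi_i$. There is no substantive obstacle beyond this bookkeeping.
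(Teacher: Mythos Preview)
The paper does not give its own proof of this proposition; it is quoted verbatim from Gu's paper (Proposition~3.2 there), with only a typo corrected in the subsequent remark. Your argument is correct and is essentially the natural one: the coordinate projection $\pi_i$ yields a morphism of fiber sequences from $T$ to $K$ covering the identity of $K(\Z,3)$, and the formula then follows from naturality and multiplicativity of the Serre spectral sequence.

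One point deserves a little more care. You assert that the induced map on the base $K(\Z,3)$ is the identity, citing ``the rightmost vertical arrow of the diagram.'' In your displayed diagram of short exact sequences of Lie groups the rightmost vertical arrow is $PT^n\to *$, not a self-map of $K(\Z,3)$, so this sentence does not literally justify the claim. The correct reason is that the \emph{leftmost} vertical arrow---the map on the normal subgroup $S^1$---is the identity, and the map on $K(\Z,3)=B^2S^1$ is obtained functorially from that map of kernels; equivalently, since $B\pi_i$ is a map of $BS^1$-fibrations restricting to the identity on fibers, it exhibits $BT^n\to BPT^n$ as the pullback of $BS^1\to *$ along $BPT^n\to *$, so the classifying maps to $K(\Z,3)$ agree via the identity. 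With this small clarification your argument is complete and matches what one finds in the original reference.
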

\begin{rem}
    Here we correct another typo in the original Proposition 3.2 in \cite{gu2019cohomology}, in which ``$~\xi\in {^{T}\!E}_{r}^{0,*}$~'' should be replaced by ``~$\xi\in {^{T}\!E}_{r}^{*,0}$~''.
\end{rem}

By comparing with the differentials in $^K\!E$, one could obtain the following results on differentials in $^T\!E$.

\begin{prop}[\cite{gu_zzz}, Lemma 3.1]
\label{prop: dT vn x1}
In the spectral sequence $^T\!E$, we have

\begin{enumerate}
    \item 
$v_{n}^{k}x_1 \in \opn{Im} {^T\!d}_3$
for $0 \le k \le p-2$ or $k=p$, 
\item 
$^{T}\!d^{3,*}_{2p-1} (v_{n}^{p-1} x_1) = y_{p,0}$.
\end{enumerate}
\end{prop}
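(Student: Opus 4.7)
The plan is to transport both parts from Proposition \ref{prop:diff0} via the comparison formula of Proposition \ref{pro:diff1}, working $p$-locally throughout (as is natural in the context of $p$-primary computations).

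For part (1), Proposition \ref{pro:diff1} gives ${^T\!d}_3(v_n) = x_1$, and the Leibniz rule on the $E_3$ page then yields
\[
{^T\!d}_3(v_n^{k+1}) = (k+1)\, v_n^k x_1.
\]
The coefficient $k+1$ is a unit in $\Zp$ exactly when $p \nmid (k+1)$, which covers $0 \le k \le p-2$ (where $k+1 \in \{1,\dots,p-1\}$) and $k = p$ (where $k+1 = p+1$); in either case $v_n^k x_1 \in \opn{Im} {^T\!d}_3$. The excluded value $k = p-1$ has coefficient $p$, which is precisely the obstruction addressed by part (2).

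For part (2), apply Proposition \ref{pro:diff1} with $i = n$, $t = p-1$, $\xi = x_1$, together with the differential ${^K\!d}_{2p-1}(v^{p-1} x_1) = y_{p,0}$ read off from Proposition \ref{prop:diff0} (case $k = 0$, $e = 1$, $l = 1$), to obtain
\[
{^T\!d}_{2p-1}^{3,2(p-1)}(v_n^{p-1} x_1) \;=\; (B\pi_n)^*\bigl({^K\!d}_{2p-1}(v^{p-1} x_1)\bigr) \;=\; (B\pi_n)^*(y_{p,0}) \;=\; y_{p,0},
\]
the last equality because $y_{p,0}$ lies in the base and is therefore fixed by $(B\pi_n)^*$.

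The main obstacle, though mild, is to verify that $v_n^{p-1} x_1$ survives to the $E_{2p-1}$ page before the formula above can be applied. On the third page, Leibniz gives ${^T\!d}_3(v_n^{p-1} x_1) = (p-1) v_n^{p-2} x_1^2 + v_n^{p-1}\,{^T\!d}_3(x_1) = 0$, using $x_1^2 = 0$ for $p$ odd and ${^T\!d}_3(x_1) = 0$. For the intermediate pages $4 \le r \le 2p-2$, Proposition \ref{pro:diff1} reduces ${^T\!d}_r(v_n^{p-1} x_1)$ to ${^K\!d}_r(v^{p-1} x_1)$, which vanishes since Proposition \ref{prop:diff0} records no nontrivial differential in that range on the generators $v$, $x_1$, $y_{p,0}$, and the Leibniz rule then forces the vanishing on the monomial. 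This survival check is the only non-mechanical step; everything else is direct bookkeeping.
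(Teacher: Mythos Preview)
The paper does not give its own proof of this proposition; it is quoted from \cite{gu_zzz}. Your argument---transporting both parts from $^K\!E$ via Propositions \ref{prop:diff0} and \ref{pro:diff1}---is the natural one and is essentially correct.

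One minor imprecision: your survival check for $v_n^{p-1}x_1$ on pages $4\le r\le 2p-2$ invokes ``the Leibniz rule on the generators $v,x_1$'', but neither $v_n$ nor $x_1$ persists individually past the $E_3$ page ($v_n$ is not a $d_3$-cycle, and $x_1$ is a $d_3$-boundary since ${^T\!d}_3(v_n)=x_1$), so the Leibniz formulation on those later pages is not literally available. The cleaner justification is by degree: for $4\le r\le 2p-2$ the target ${^T\!E}_r^{3+r,\,2p-1-r}$ is a subquotient of $H^{3+r}(K(\Z,3))_{(p)}\otimes H^{2p-1-r}(BT^n)$, and Proposition \ref{prop: p local cohomology of KZ3 below 2p+5} gives $H^{3+r}(K(\Z,3))_{(p)}=0$ for all $7\le 3+r\le 2p+1$, so these differentials vanish automatically. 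With this adjustment your proof goes through.
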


\begin{prop}[\cite{gu2019cohomology}, Proposition 3.3]
\label{prop: d3 of T}
    \begin{enumerate}
        \item The differential $^{T}\!d_{3}^{0,t}$ is given by the ``formal divergence''
        \[\nabla=\sum_{i=1}^{n}(\partial/\partial v_i): H^{t}(BT^{n};R)\rightarrow H^{t-2}(BT^{n};R),\]
        in such a way that $^{T}\!d_{3}^{0,*}=\nabla(-)\cdot x_{1}.$ For any ground ring $R=\mathbb{Z}$ or $\mathbb{Z}/m$ for any integer $m$.
        \item The spectral sequence degenerates at ${{^T}\!E}^{0,*}_{4}$. Indeed, we have $^{T}\!E_{\infty}^{0,*}=$ $^{T}\!E_{4}^{0,*}=\operatorname{Ker}{^T\!d}_{3}^{0,*}=\mathbb{Z}[v_{1}-v_{n},\cdots, v_{n-1}-v_{n}]$.
    \end{enumerate}
\end{prop}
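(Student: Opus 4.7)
The plan for part (1) is to determine the differential on the degree-$2$ generators $v_i$ first and then extend via the Leibniz rule. Applying Proposition \ref{pro:diff1} with $r=3$, $t=1$, $\xi=1$, together with ${}^K\!d_3(v)=x_1$ from Proposition \ref{prop:diff0}, yields
\[
{}^T\!d_3^{0,2}(v_i)=(B\pi_i)^*\bigl({}^K\!d_3(v)\bigr)=x_1
\]
for every $1\le i\le n$, since $x_1$ lies in the base cohomology $H^3(K(\Z,3);R)$ common to both spectral sequences and $(B\pi_i)^*$ acts as the identity there. Because every $v_i$ has even total degree, the multiplicativity of the Serre spectral sequence and an induction on monomial degree then give
\[
{}^T\!d_3\bigl(v_1^{a_1}\cdots v_n^{a_n}\bigr)=\Bigl(\sum_{i=1}^n a_i\,v_1^{a_1}\cdots v_i^{a_i-1}\cdots v_n^{a_n}\Bigr)\cdot x_1=\nabla(v_1^{a_1}\cdots v_n^{a_n})\cdot x_1,
\]
and $R$-linear extension completes (1). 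The argument is uniform in $R$ by naturality of the spectral sequence under change of coefficients.

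For part (2), my plan is first to identify $\ker\nabla$ algebraically and then to exhibit its generators as permanent cycles. A change of variables $w_i=v_i-v_n$ for $1\le i\le n-1$ together with $w_n=v_n$ turns $\nabla$ into $\partial/\partial w_n$, so $\ker\nabla=R[w_1,\ldots,w_{n-1}]=R[v_1-v_n,\ldots,v_{n-1}-v_n]$, giving the asserted description of ${}^T\!E_4^{0,*}$.

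The main obstacle is then the degeneration ${}^T\!E_4^{0,*}={}^T\!E_\infty^{0,*}$, i.e., that every class in $\ker\nabla$ is a permanent cycle. My plan is to lift each generator $v_i-v_n$ to an honest class in $H^2(BPT^n;R)$: the character $\chi_i-\chi_n\colon T^n\to S^1$ vanishes on the diagonal $S^1\subset T^n$, hence descends to a character of $PT^n=T^n/S^1$, and the first Chern class of the associated line bundle on $BPT^n$ restricts along the fiber inclusion $BT^n\hookrightarrow BPT^n$ to $v_i-v_n$. Since the fiber restriction coincides with the edge homomorphism $H^*(BPT^n;R)\twoheadrightarrow{}^T\!E_\infty^{0,*}$, each $v_i-v_n$ is a permanent cycle, and by multiplicativity so is every polynomial in them. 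This forces all higher differentials out of the zero column to vanish and yields the claimed degeneration at $E_4$.
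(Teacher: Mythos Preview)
The paper does not give its own proof of this proposition: it is quoted verbatim from \cite{gu2019cohomology} (Proposition~3.3) and used as input. There is therefore nothing in the present paper to compare against.

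That said, your argument is correct and is essentially the standard one. For part~(1), reducing to ${}^T\!d_3(v_i)=x_1$ via Proposition~\ref{pro:diff1} and then invoking the Leibniz rule is exactly right. For part~(2), the change of variables identifying $\nabla$ with $\partial/\partial w_n$ is clean; note that the identification $\ker(\partial/\partial w_n)=\Z[w_1,\ldots,w_{n-1}]$ uses that $\Z$ is torsion-free (over $\Z/p$ one would pick up $w_n^p$, etc.), which is why the stated kernel in part~(2) is specifically integral even though part~(1) allows general~$R$. Your geometric argument for degeneration---descending the character $\chi_i-\chi_n$ to $PT^n$ and taking its first Chern class---is equivalent to observing directly that $PT^n\cong T^{n-1}$, so that $H^*(BPT^n)\cong\Z[u_1,\ldots,u_{n-1}]$ with the fiber restriction sending $u_i\mapsto v_i-v_n$; either way one sees that the edge homomorphism hits all of $\ker\nabla$, forcing ${}^T\!E_4^{0,*}={}^T\!E_\infty^{0,*}$.
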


\begin{cor}[\cite{gu2019cohomology}, Corollary 3.4]\label{cor:d3}
Set $c_0 =1$. For $k\ge 1$, we have
    \[^{U}\! d_{3}^{0,*}(c_{k})=\nabla(c_{k})x_1=(n-k+1)c_{k-1}x_1.\]
\end{cor}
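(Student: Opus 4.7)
The plan is to prove the formula by naturality along the map of fiber sequences $T \to U$ in the diagram \eqref{eq:3_by_3_diag}, pulling back the explicit $^{T}\!d_{3}$ computed in Proposition \ref{prop: d3 of T} through the inclusion $B\psi^{*}: H^{*}(BU_{n}) \hookrightarrow H^{*}(BT^{n})$ described in \eqref{equation: cohomology map BUn to BTn}.

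First I would note that the map of fiber sequences $T \to U$ induces a morphism of Serre spectral sequences $^{U}\!E \to {^{T}\!E}$. On the fiber column, this is $\operatorname{id}\otimes B\psi^{*}$ on $E_{2}$, which is injective because $B\psi^{*}: \mathbb{Z}[c_{1},\dots,c_{n}] \to \mathbb{Z}[v_{1},\dots,v_{n}]$ is injective and $H^{*}(BU_{n})$ is torsion-free. Next I would verify that the $E_{3}$ page agrees with the $E_{2}$ page in the bidegrees relevant to $^{U}\!d_{3}^{0,2k}(c_{k})$: in both $^{U}\!E$ and $^{T}\!E$, the $d_{2}$ differential is zero on the $s=0$ and $s=3$ columns because $H^{s}(K(\mathbb{Z},3))$ vanishes for $s=1,2,4,5$ by Proposition \ref{prop: p local cohomology of KZ3 below 2p+5}. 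Hence $^{U}\!d_{3}^{0,2k}(c_{k})$ is determined by its image in $^{T}\!E_{3}^{3,2k-2}$, which by naturality equals $^{T}\!d_{3}^{0,2k}(B\psi^{*}(c_{k})) = {^{T}\!d_{3}^{0,2k}}(\sigma_{k}(v_{1},\dots,v_{n}))$.

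Then I would apply Proposition \ref{prop: d3 of T}(1) to rewrite this as $\nabla(\sigma_{k})\cdot x_{1}$, and carry out the elementary combinatorial calculation
\[
\nabla(\sigma_{k}) \;=\; \sum_{j=1}^{n} \frac{\partial \sigma_{k}}{\partial v_{j}} \;=\; \sum_{j=1}^{n} \sigma_{k-1}(v_{1},\dots,\widehat{v_{j}},\dots,v_{n}) \;=\; (n-k+1)\,\sigma_{k-1}(v_{1},\dots,v_{n}),
\]
where the last equality follows by counting, for each squarefree monomial of degree $k-1$, how many indices $j$ omit one of its variables. Thus $^{T}\!d_{3}(B\psi^{*}(c_{k})) = (n-k+1)\,\sigma_{k-1}\cdot x_{1} = B\psi^{*}\bigl((n-k+1) c_{k-1} x_{1}\bigr)$, and injectivity of $\operatorname{id}\otimes B\psi^{*}$ on $E_{3}^{3,2k-2}$ yields the claimed identity $^{U}\!d_{3}^{0,*}(c_{k}) = (n-k+1) c_{k-1} x_{1}$.

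There is no real obstacle here: the argument is pure naturality plus a one-line symmetric-function identity. The only point that requires mild care is confirming that $d_{2}$ vanishes in the relevant bidegrees so that the comparison of $d_{3}$ differentials is well-defined on $E_{3}$, but this is immediate from the low-dimensional $p$-local cohomology of $K(\mathbb{Z},3)$ recorded in Proposition \ref{prop: p local cohomology of KZ3 below 2p+5} (and indeed even integrally, since $H^{s}(K(\mathbb{Z},3))$ vanishes for $s=1,2,4,5$).
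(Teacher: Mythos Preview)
Your proposal is correct and is precisely the intended argument: the statement is quoted in the paper as \cite{gu2019cohomology}, Corollary 3.4, with no proof given here, and the label ``Corollary'' signals that it follows from Proposition \ref{prop: d3 of T} by exactly the naturality-plus-symmetric-function computation you describe. One minor simplification: you do not need to invoke the low-degree vanishing of $H^{*}(K(\mathbb{Z},3))$ to see that $d_{2}$ is irrelevant, since $H^{*}(BU_{n})$ and $H^{*}(BT^{n})$ are concentrated in even degrees, forcing every $d_{2}$ to have trivial target and hence $E_{3}=E_{2}$ in both spectral sequences.
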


Recall that (\ref{equation: tensor form E2 of U}) implies
\begin{equation}
    \label{eq:p local tensor form E2 of U}
    (^U\! E^{s, t}_{2})_{(p)} \cong H^{s}(K(\mathbb{Z},3))_{(p)} \otimes H^{t}(BU_{n})
    \cong H^{s}(K(\mathbb{Z},3)) \otimes H^{t}(BU_{n})_{(p)}.
\end{equation}

\begin{notation}
    For the remainder of this paper, 
    we will use $^U\!E$, $^T\!E$ and $^K\!E$ to denote the
    corresponding $p$-localized Serre spectral sequence.
\end{notation}

\section{Proof of lemmas}
\label{sec: proof of lem}

In this section we prove two necessary lemmas.
Before the statement of lemmas,
we define a total order.

For an positive even $2t$, (\ref{eq:p local tensor form E2 of U}) shows the $\Zp$-module $^U \!E_3 ^{3,2t}$ is freely 
generated by elements of the form $cx_1$ for
\begin{equation}
    \label{equ:def of S't}
    c\in S'_t := \{ c_{i_1} c_{i_2}\cdots c_{i_l}|1\le i_1 \le
i_2\le\cdots\le i_l\le t,\ \Sigma_{k=1}^l i_k =t\}.
\end{equation}
We define a total order $\mathfrak{O}_t$ on these monomials 
as follows. We assert
$$c_{i_1} c_{i_2}\cdots c_{i_l}< c_{i'_1} c_{i'_2}\cdots c_{i'_s}$$
if and only if one of the following two conditions holds:

(1) $l<s$.

(2) $l=s$ and for the smallest $k$ such that $i_k\neq i'_k$,
we have $i_k<i'_k$.

To compare $cx_1 ,c'x_1 \in S'_t x_1$, we assert $cx_1 <c'x_1$
if and only if $c<c'$.
In addition, for the monomial $c_{i_1} c_{i_2}\cdots c_{i_l}x_1$, we define 
\begin{equation}
    \label{equ:bar cx}
    \overline{c_{i_1} c_{i_2}\cdots c_{i_l}x_1}
    =
    \begin{cases}
        ^U\!d_3 (c_{i_1} \cdots c_{i_{l-1}} c_{i_l +1}),
        \textrm{ if } p\nmid i_l,\\
        c_{i_1} c_{i_2}\cdots c_{i_l}x_1 ,\textrm{ if } p\mid i_l.
    \end{cases}
\end{equation}
We have an observation that 
any monomial in
$\overline{c_{i_1} c_{i_2}\cdots c_{i_l}x_1}$
is lower than or equal to $c_{i_1} c_{i_2}\cdots c_{i_l}x_1$.

For any $\mathfrak{c} \in S'_t $, let $\mathfrak{M}^t_{\mathfrak{c}}$ 
(or $\overline{\mathfrak{M}^t_{\mathfrak{c}}}$) be the $\Zp$-module freely 
generated by the elements $cx_1$ (or $\overline{cx_1}$) with $c\in S'_t$, $c\le \mathfrak{c}$. 
Then we have

\begin{lem}
    \label{lem:c bar = c}
    $\mathfrak{M}^t_{\mathfrak{c}}
    =\overline{\mathfrak{M}^t_{\mathfrak{c}}}$.
    In particular,
    as a $\Zp$-module, $^U \!E_3 ^{3,2t}$ can be freely
    generated by elements of the form $\overline{cx_1}$
    for $c\in S'_t$.
\end{lem}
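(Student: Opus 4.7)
The plan is to prove both inclusions between $\mathfrak{M}^t_{\mathfrak{c}}$ and $\overline{\mathfrak{M}^t_{\mathfrak{c}}}$ by strong induction on $\mathfrak{c}$ with respect to $\mathfrak{O}_t$. The containment $\overline{\mathfrak{M}^t_{\mathfrak{c}}} \subseteq \mathfrak{M}^t_{\mathfrak{c}}$ is immediate from the observation recorded right before the lemma. The reverse inclusion is reduced by induction to a triangular change-of-basis, from which the free-generation statement follows formally.

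The computational core is the following identity, obtained by applying Corollary \ref{cor:d3} and the Leibniz rule to $d_3(c_{i_1}\cdots c_{i_{l-1}}c_{i_l+1})$: for $\mathfrak{c} = c_{i_1}\cdots c_{i_l}$ with $p\nmid i_l$,
\[
\overline{c_{i_1}\cdots c_{i_l}x_1}
= (n-i_l)\,c_{i_1}\cdots c_{i_l}x_1 + \sum_{k=1}^{l-1}(n-i_k+1)\,c_{i_1}\cdots c_{i_k-1}\cdots c_{i_{l-1}}c_{i_l+1}x_1.
\]
The next step is to verify that each of the $l-1$ side summands is strictly below $c_{i_1}\cdots c_{i_l}x_1$ in $\mathfrak{O}_t$. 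When $i_k=1$, the factor $c_{i_k-1}=c_0=1$ drops out and the length falls to $l-1$, so the monomial is smaller by criterion (1). When $i_k>1$, one reorders the indices $(i_1,\ldots,i_k-1,\ldots,i_{l-1},i_l+1)$ into non-decreasing order and performs a short case analysis on the insertion position of $i_k-1$: in every case the first position at which the sorted tuple differs from $(i_1,\ldots,i_l)$ carries a strictly smaller entry, so it is smaller by criterion (2).

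The induction now runs smoothly. The minimum element of $S'_t$ under $\mathfrak{O}_t$ is $c_t$: the base case splits into $p\mid t$ (where $\overline{c_tx_1}=c_tx_1$) and $p\nmid t$ (where $\overline{c_tx_1}=(n-t)c_tx_1$ with $n-t$ a unit in $\Zp$ under the standing hypothesis $p\mid n$, which is the only nontrivial case by Proposition \ref{pro:n-torsion}). For the inductive step with $\mathfrak{c}=c_{i_1}\cdots c_{i_l}$, the case $p\mid i_l$ gives $\overline{\mathfrak{c}x_1}=\mathfrak{c}x_1$ directly, while in the case $p\nmid i_l$ the displayed identity together with the inductive hypothesis places $(n-i_l)\mathfrak{c}x_1$ in $\overline{\mathfrak{M}^t_{\mathfrak{c}}}$, and invertibility of $n-i_l$ in $\Zp$ finishes the step. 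The main obstacle is the combinatorial verification that each Leibniz side term is strictly below $\mathfrak{c}x_1$ in $\mathfrak{O}_t$; once that is secured, the rest is an essentially formal triangular-substitution argument whose diagonal entries are all units in $\Zp$ thanks to $p\mid n$.
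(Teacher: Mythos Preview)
Your proof is correct and follows essentially the same approach as the paper's: both establish $\overline{\mathfrak{M}^t_{\mathfrak{c}}}\subseteq\mathfrak{M}^t_{\mathfrak{c}}$ from the observation preceding the lemma, then prove the reverse inclusion by induction on $\mathfrak{c}$ along $\mathfrak{O}_t$, using that $\overline{\mathfrak{c}x_1}=(n-i_l)\mathfrak{c}x_1+(\text{lower terms})$ with $n-i_l\in\Zp^\times$ when $p\nmid i_l$ and $p\mid n$. The only difference is that you spell out the Leibniz expansion and the order comparison of the side terms explicitly, whereas the paper absorbs this into the prior observation and the phrase ``lower order terms''.
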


\begin{proof}
The observation below \eqref{equ:bar cx}
implies $\overline{\mathfrak{M}^t_{\mathfrak{c}}}\subset \mathfrak{M}^t_{\mathfrak{c}}$. So it suffices to prove $\mathfrak{M}^t_{\mathfrak{c}} \subset \overline{\mathfrak{M}^t_{\mathfrak{c}}}$ for all $\mathfrak{c} \in S'_t$. 

We prove this claim by induction on the order. For the
lowest element $\mathfrak{c}=c_t$, the claim is clear
if $p\mid t$. If $p\nmid t$, $\overline{c_t x_1}
={^U\!d}_3 (c_{t+1}) = (n-t) c_t x_1$
and thus $c_t x_1 \in \overline{\mathfrak{M}^t_{c_t }}$.

Now assume we have proven the claim for all $c<\mathfrak{c}$. Note that $\mathfrak{M}^t_{\mathfrak{c}}
=\mathfrak{M}^t_{\mathfrak{c}'}\oplus \Zp\{\mathfrak{c}x_1\}$,
where $\mathfrak{c}'$ is the largest monomial less that
$\mathfrak{c}$. By assumption, $\mathfrak{M}^t_{\mathfrak{c}'}\subset \overline{\mathfrak{M}^t_{\mathfrak{c'}}}\subset \overline{\mathfrak{M}^t_{\mathfrak{c}}}$.
As before, we write $\mathfrak{c}=c_{i_1} c_{i_2}\cdots c_{i_l}$.

If $p\mid i_l$, $\mathfrak{c}x_1=\overline{\mathfrak{c}x_1}$. Thus $\Zp\{\mathfrak{c}x_1\}\subset \overline{\mathfrak{M}^t_{\mathfrak{c}}}$.

If $p\nmid i_l$, by (\ref{equ:bar cx}) and Corollary \ref{cor:d3}, we have
$$\overline{\mathfrak{c}x_1}=\ ^U\!d_3 (c_{i_1} \cdots c_{i_{l-1}} c_{i_l +1})=(n-i_l )\mathfrak{c}x_1 +
\textrm{(lower order terms)}.$$
By assumption and $p\nmid i_l$, $\mathfrak{c}x_1 \in \overline{\mathfrak{M}^t_{\mathfrak{c}}}
+\mathfrak{M}^t_{\mathfrak{c'}}
=\overline{\mathfrak{M}^t_{\mathfrak{c}}}
+\overline{\mathfrak{M}^t_{\mathfrak{c'}}}$.
Thus $\Zp\{\mathfrak{c}x_1\}\subset \overline{\mathfrak{M}^t_{\mathfrak{c}}}$.

\end{proof}

Before the statement of next lemma, we define two series of numbers in 
$\Z_{(p)}$. Let
\begin{equation*}
    \begin{split}
        & A_k =\prod _{j=k}^{(p-1)/2} \frac{n-j}{n-p+j}, 
        \textrm{ for } 1\le k \le (p-1)/2.\\
        & B_k =\prod _{j=k}^{(p+1)/2} \frac{n-j}{n-p-2+j}, 
        \textrm{ for } 3\le k \le (p+1)/2.
    \end{split}
\end{equation*}
Then we claim that

\begin{lem}
    \label{lem:three boundaries}
    There exist $X_1 \in {^U \!E}_3 ^{0,2p+4}$ and $X_2 ,X_3 \in 
    {^U \!E}_3 ^{0,2p+6}$ 
    such that
\begin{equation*}
    \begin{split}
        ^U\! d_3 (X_1) = & (-1)^{\frac{p-1}{2}} (p+2)nA_1 
        c_p c_1 x_1,\\
        ^U\! d_3 (X_2) = & (-1)^{\frac{p+1}{2}} 2B_3 (n-2) c_p c_2 x_1
        ,\\
        ^U\! d_3 (X_3) = & (-1)^{\frac{p-1}{2}} pA_1 c_p c_1 ^2 x_1
        + (\textrm{lower order terms}).
    \end{split}
\end{equation*}

\end{lem}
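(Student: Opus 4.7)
The plan is to construct $X_1$, $X_2$, $X_3$ by starting with a carefully chosen seed monomial in $^U\!E_3^{0,\,*}$, taking its $^U\!d_3$-image, and reducing the auxiliary summands to multiples of the target via the $\overline{\cdot}$-basis of Lemma \ref{lem:c bar = c}. The workhorse identity is
\[
(n-b)\,c_a c_b\, x_1 \equiv -(n-a+1)\,c_{a-1} c_{b+1}\, x_1 \pmod{\mathrm{Im}({^U\!d}_3)},
\]
valid for $a \le b$ with $p \nmid b$; it follows immediately from writing out $\overline{c_a c_b x_1} = {^U\!d}_3(c_a c_{b+1})$ with Corollary \ref{cor:d3}. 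Since $c_{a-1} c_{b+1}$ is strictly lower than $c_a c_b$ in the order $\mathfrak{O}_t$, iterating along such a chain must eventually terminate at a monomial $c\,x_1$ whose largest index is divisible by $p$, which is a free $\overline{\cdot}$-basis element and cannot be reduced further. A similar identity for 3-factor (and higher) monomials introduces parallel reduction branches.

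For $X_1$, I would take the seed $c_1\, c_{(p+1)/2}^2 \in {^U\!E}_3^{0,\,2p+4}$. Its differential splits into a 2-factor term $n\, c_{(p+1)/2}^2 x_1$ and a 3-factor term $2(n - \tfrac{p-1}{2})\, c_1 c_{(p-1)/2} c_{(p+1)/2} x_1$. The 2-factor chain $c_{(p+1)/2}^2 \to c_{(p-1)/2} c_{(p+3)/2} \to \cdots \to c_1 c_p$ has $(p-1)/2$ steps, and the accumulated product of the factors $-(n-a+1)/(n-b)$ gives $n\, c_{(p+1)/2}^2 x_1 \equiv (-1)^{(p-1)/2} n A_1\, c_p c_1 x_1$. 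A parallel reduction of the 3-factor piece, which branches into a 2-factor sub-chain and a shorter 3-factor sub-chain at each step, contributes $(-1)^{(p-1)/2}(p+1) n A_1\, c_p c_1 x_1$; this can be verified directly for small $p=3,5$ and then by induction on $(p-1)/2$ in general. Summing, the image of the seed is $(-1)^{(p-1)/2}(p+2) n A_1\, c_p c_1 x_1$ modulo $\mathrm{Im}({^U\!d}_3)$. Collecting the $^U\!d_3$-exact corrections from each substitution into an element $W$ yields $X_1 = c_1 c_{(p+1)/2}^2 - W$ with the claimed differential.

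For $X_2$ and $X_3$ in degree $2p+6$, the strategy is analogous. A seed for $X_2$ such as $c_2\, c_{(p+1)/2}^2$ drives the chain toward $c_p c_2 x_1$; the indexing of the reduction chain is shifted by one in the second factor, so the denominators become $n-p-2+j$ and reproduce the product $B_k$. For $X_3$, only the leading term is required by the statement, so I would start from $c_1^2\, c_{(p+1)/2}^2$ (or similar) and verify only that the coefficient of $c_p c_1^2 x_1$ in the reduced form equals $(-1)^{(p-1)/2} p A_1$; the remaining lower-order terms need not be driven to zero, which considerably simplifies the bookkeeping. The main obstacle in all three cases is precisely this combinatorial bookkeeping through the multiple parallel branches of the reduction tree, and confirming that the accumulated products exactly recover $A_k$ and $B_k$. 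All denominators appearing are nonzero mod $p$ (since $p \mid n$ forces $n - p + j \equiv j$ and $n - p - 2 + j \equiv j - 2$, which avoid $0 \pmod p$ in the relevant ranges), hence are invertible in $\Zp$, and the construction is well-defined.
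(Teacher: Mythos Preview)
Your approach is essentially the paper's: both construct $X_1,X_2,X_3$ explicitly and verify the differential using Corollary~\ref{cor:d3} together with the telescoping identity $(n-p+k)A_k=(n-k)A_{k+1}$. The paper simply writes out the closed forms that your iterative reduction would produce (e.g.\ $X_1$ begins with exactly your seed $c_1 c_{(p+1)/2}^2$, followed by the correction terms your ``collecting $W$'' would generate), and then checks $X_1$ by a direct expansion rather than by induction.

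One point to correct: for $X_2$ the paper takes the \emph{two}-factor seed $c_{(p+3)/2}^2$, whose reduction chain
\[
c_{(p+1)/2}c_{(p+3)/2}\to c_{(p-1)/2}c_{(p+5)/2}\to\cdots\to c_2 c_p
\]
stays two-factor throughout and reproduces the product $B_k$ directly. Your proposed three-factor seed $c_2\,c_{(p+1)/2}^2$ would not chain cleanly to the two-factor target $c_p c_2 x_1$: each step of the three-factor reduction (coming from $d_3(c_2 c_a c_{b+1})$) throws off an additional $(n-1)c_1 c_a c_{b+1}x_1$ branch, so you would land on $c_1 c_2 c_p x_1$ plus a cloud of $c_1$-terms rather than on $c_p c_2 x_1$. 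Switching to the two-factor seed fixes this and makes the $B_k$ bookkeeping immediate.
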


\begin{proof}

It suffices to construct $X_1, X_2, X_3$ satisfy 
the conditions. Let
\begin{equation*}
        \begin{split}
            X_1 = & c^2 _{\frac{p+1}{2}} c_1 -\frac{n}{n-\frac{p+1}{2}}c_{\frac{p+3}{2}}c_{\frac{p+1}{2}} +2 \Sigma_{k=1}
            ^{\frac{p-1}{2}} (-1)^{\frac{p-1}{2}-k+1}A_k 
            c_{p-k+1}c_k c_1 \\
            & + \Sigma_{k=2}
            ^{\frac{p-1}{2}} (-1)^{\frac{p-1}{2}-k} (p+2-2k)\frac{nA_k}{n-p+k-1} 
            c_{p-k+2}c_k ,\\
            X_2 = & c_{\frac{p+3}{2}}^2 +\Sigma_{k=3}
            ^{\frac{p+1}{2}} (-1)^{\frac{p+1}{2}-k+1}2B_k 
            c_{p-k+3}c_k,\\
            X_3 = & \frac{1}{n-1} (c_{\frac{p+1}{2}}^2 c_2 +
            \Sigma_{k=2}
            ^{\frac{p-1}{2}} (-1)^{\frac{p-1}{2}-k+1}2A_k 
            c_{p-k+1}c_k c_2)- \frac{1}{n-\frac{p+1}{2}} c_{\frac{p+3}{2}} c_{\frac{p+1}{2}} c_1\\
            & + \Sigma_{k=2}
            ^{\frac{p-1}{2}} (-1)^{\frac{p-1}{2}-k}(p-2+2k)\frac{A_k}{n-p+k-1} 
            c_{p-k+2}c_k c_1 \\
            & +(-1)^{\frac{p-1}{2}} 
            \frac{2A_2 }{n-p+1} c_p c_2 c_1.
        \end{split}
    \end{equation*}
    
We only verify the first formula in Lemma \ref{lem:three boundaries}. The other two formulas can be verified similarly. Note the definition of $A_k$ implies

\begin{equation}
    \label{equ: Ak, Bk}
    (n-p+k)A_k =(n-k)A_{k+1} \text{ for } 1\le k< (p-1)/2.
\end{equation}
Immediate calculation shows

\begin{equation*}
    \begin{split}
        ^U\! d_3 (X_1) = & 2(n-\frac{p-1}{2}) c_{\frac{p+1}{2}} 
        c_{\frac{p-1}{2}} c_1 x_1 + nc^2 _{\frac{p+1}{2}} x_1
        -nc^2 _{\frac{p+1}{2}} x_1 
        -nA_{\frac{p-1}{2}} c_{\frac{p+3}{2}} c_{\frac{p-1}{2}} x_1\\
        & + 2\Sigma_{k=1}^{\frac{p-1}{2}} (-1)^{\frac{p-1}{2}-k+1}
        A_k (n-p+k)c_{p-k} c_k c_1 x_1\\
        & + 2\Sigma_{k=1}^{\frac{p-1}{2}} (-1)^{\frac{p-1}{2}-k+1}
        A_k ((n-k+1)c_{p-k+1} c_{k-1} c_1 + nc_{p-k+1} c_k )x_1\\
        & + \Sigma_{k=2}^{\frac{p-1}{2}} (-1)^{\frac{p-1}{2}-k} 
        (p+2-2k) nA_k c_{p-k+1}c_k x_1 \\
        & + \Sigma_{k=2}^{\frac{p-1}{2}} (-1)^{\frac{p-1}{2}-k} 
        (p+2-2k) \frac{n(n-k+1)A_k }{n-p+k-1}
         c_{p-k+2}c_{k-1}x_1.
    \end{split}
\end{equation*}

Then by \eqref{equ: Ak, Bk}, 
\begin{equation*}
    \begin{split}
        ^U\! d_3 (X_1) = & 2(n-\frac{p-1}{2}) c_{\frac{p+1}{2}} 
        c_{\frac{p-1}{2}} c_1 x_1 
        -nA_{\frac{p-1}{2}} c_{\frac{p+3}{2}} c_{\frac{p-1}{2}} x_1\\
        & + 2\Sigma_{k=1}^{\frac{p-1}{2}} (-1)^{\frac{p-1}{2}-k+1}
        A_k (n-p+k)c_{p-k} c_k c_1 x_1\\
        & + 2\Sigma_{k=1}^{\frac{p-3}{2}} (-1)^{\frac{p-1}{2}-k}
        A_{k+1} (n-k)c_{p-k} c_{k} c_1 x_1 + (-1)^{\frac{p-1}{2}}
        4nA_1 c_p c_1 x_1\\
    \end{split}
\end{equation*}
\begin{equation*}
    \begin{split}
        & + \Sigma_{k=2}^{\frac{p-1}{2}} (-1)^{\frac{p-1}{2}-k} 
        (p-2k) nA_k c_{p-k+1}c_k x_1 \\
        & + \Sigma_{k=1}^{\frac{p-3}{2}} (-1)^{\frac{p-1}{2}-k+1} 
        (p-2k) \frac{n(n-k)A_{k+1} }{n-p+k}
         c_{p-k+1}c_{k}x_1\\
         = & (-1)^{\frac{p-1}{2}} (p+2)nA_1 
        c_p c_1 x_1.
    \end{split}
\end{equation*}
\end{proof}

\section{The proof of Theorem \ref{thm: p>2, main thm, 2p+5,7,8 }}
\label{sec: The proof of thm 1}

The purpose of this section is to prove 
Theorem \ref{thm: p>2, main thm, 2p+5,7,8 }. 
We will show later it's a consequence of
the following two propositions.

\begin{prop}
    \label{prop:E inf 3,2p+2 3,2p+4}
    In the spectral sequence, we have
    $${^U\!E}^{3,2p+2}_{\infty}={^U\!E}^{3,2p+4}_{\infty}=0.$$
\end{prop}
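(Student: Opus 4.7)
To show $^U\!E^{3, 2p+2}_\infty = 0$ and $^U\!E^{3, 2p+4}_\infty = 0$, I plan to proceed in three stages: reduce $^U\!E_3$ via Lemma~\ref{lem:c bar = c} to a short list of candidate non-boundary classes, use $^U\!d_3$ (invoking Lemma~\ref{lem:three boundaries} together with a few direct Leibniz calculations) to further reduce $^U\!E_4$ at each position to a cyclic group of order at most $p$, and finally eliminate the remaining $\Z/p$ via $^U\!d_{2p-1}$ by comparison with $^T\!E$.

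For the first two stages at $(3, 2p+2)$: Lemma~\ref{lem:c bar = c} presents $^U\!E_3^{3,2p+2}$ as a free $\Zp$-module on $\{\overline{cx_1} : c \in S'_{p+1}\}$, with the elements $\overline{cx_1}$ for $p \nmid i_l(c)$ being $^U\!d_3$-boundaries by definition, so that only $c_p c_1 x_1$ remains (since $c_p c_1$ is the unique element of $S'_{p+1}$ with $p \mid i_l$). The relation $^U\!d_3(X_1) = (-1)^{(p-1)/2}(p+2) n A_1 c_p c_1 x_1$ from Lemma~\ref{lem:three boundaries} places $n \cdot c_p c_1 x_1$ in $\operatorname{im}(^U\!d_3)$, while the direct Leibniz computation $^U\!d_3(c_{p+1}c_1) = (n-p)c_p c_1 x_1 + n c_{p+1}x_1$ (where $c_{p+1}x_1$ is an obvious boundary since $p \nmid p+1$) yields $(n-p)c_p c_1 x_1 \in \operatorname{im}(^U\!d_3)$. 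Since $\gcd(n, n-p) = p$ in $\Zp$ whenever $p \mid n$, we conclude $^U\!E_4^{3, 2p+2} \leq \Z/p$. The analogous reduction at $(3, 2p+4)$ leaves $c_p c_2 x_1$ and $c_p c_1^2 x_1$ as the sole candidates; the relation $^U\!d_3(X_2) = (-1)^{(p+1)/2}\, 2 B_3 (n-2) c_p c_2 x_1$ kills $c_p c_2 x_1$ in $^U\!E_4$ (using that $2$, $B_3$, and $n-2$ are all $p$-local units for $p > 2$ and $p \mid n$), and then $^U\!d_3(X_3)$, whose lower-order terms are length-$\leq 2$ monomials lying in the $\Zp$-span of the just-killed $c_p c_2 x_1$ and the obvious boundaries, reduces to $(-1)^{(p-1)/2} p A_1 c_p c_1^2 x_1$ modulo $\operatorname{im}(^U\!d_3)$, giving $^U\!E_4^{3, 2p+4} \leq \Z/p$.

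For the final stage, a direct check of the $p$-local $E_2$-pattern of $K(\Z,3)$ rules out any nontrivial $^U\!d_r$ at these positions for $3 < r < 2p-1$, so the surviving $\Z/p$ classes persist to $^U\!E_{2p-1}$. I would then compute $^U\!d_{2p-1}$ at each, targeting $(2p+2, 4)$ and $(2p+2, 6)$ respectively, by comparison with $^T\!E$: pull back along the injective map $B\psi^*$ to the symmetric expressions $\sigma_p \sigma_1 x_1$ and $\sigma_p \sigma_1^2 x_1$, apply the Leibniz rule, and invoke Proposition~\ref{prop: dT vn x1}'s formula $^T\!d_{2p-1}(v_n^{p-1} x_1) = y_{p,0}$ to extract a nonzero image in the target. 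The main obstacle will be this last step: the symmetric expansions contain many competing monomial terms, and carefully disentangling which contributions survive the earlier pages of $^T\!E$ and combine to a nonzero image in $^U\!E_{2p-1}^{2p+2, 4}$ (respectively $^U\!E_{2p-1}^{2p+2, 6}$) requires intricate coefficient bookkeeping of a flavor analogous to, but somewhat more delicate than, the computation underlying Lemma~\ref{lem:three boundaries}.
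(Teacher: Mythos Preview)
Your plan matches the paper's proof closely. One small streamlining: for $(3,2p+2)$ your $\gcd(n,n-p)=p$ argument (combining $X_1$ with the direct Leibniz relation) avoids the paper's case split on whether $p^2\mid n-p$. For the $d_{2p-1}$ step you flag as the main obstacle, the device you are missing is the substitution $v_i=v'_i+v_n$ with $v'_i:=v_i-v_n$: Proposition~\ref{prop: d3 of T} shows the $v'_i$ are permanent cycles in $^T\!E$, so after expanding $\Psi^*(c_pc_1^kx_1)$ in these variables only the $v_n^{p-1}$-coefficient contributes under Proposition~\ref{prop: dT vn x1}, and a short symmetric-function count gives $\binom{n-1}{p-1}\bigl(\sum_i v'_i\bigr)^{k+1}y_{p,0}=\Psi^*\bigl(\binom{n-1}{p-1}c_1^{k+1}y_{p,0}\bigr)\neq 0$. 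The bookkeeping is therefore much lighter than you anticipate; for $k=1$ the paper simply cites~\cite{ZHANG2023108642}, and for $k=2$ it carries out this computation in full.
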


\begin{proof}[Proof]

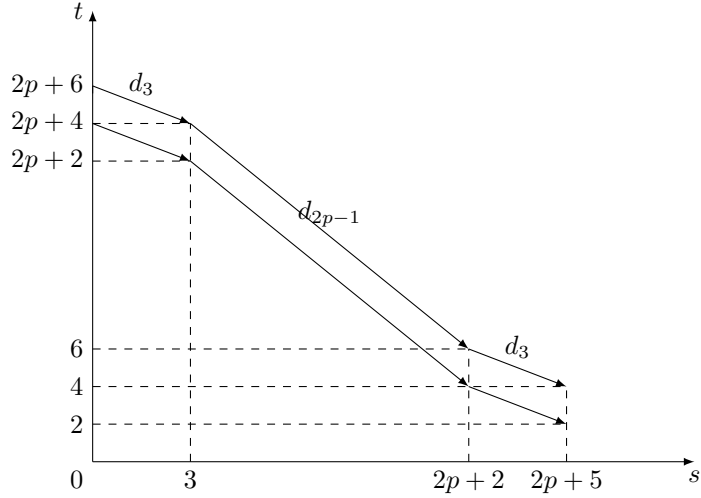
\begin{figure}
    \centering
    \begin{tikzpicture}
    \draw[-latex] (0,0) node [below left] {$0$} --(8,0) node [below] {$s$};
    \draw[-latex] (0,0)  --(0,6) node [left] {$t$};
    \draw[dashed](0,0.5) node [left] {$2$}--(6.3,0.5) ;
     \draw[dashed](0,1) node [left] {$4$}--(6.3,1) ;
      \draw[dashed](0,1.5) node [left] {$6$}--(5,1.5) ;
     \draw[dashed](0,4) node [left] {$2p+2$}--(1.3,4) ;
     \draw[dashed](0,4.5) node [left] {$2p+4$}--(1.3,4.5) ;
     \draw (0,5) node [left] {$2p+6$}--(0,4.9) ;
     \draw[dashed](1.3,0) node [below] {$3$}--(1.3,4.5) ;
     \draw[dashed](5,0) node [below] {$2p+2$}--(5,1.5) ;
     \draw[dashed](6.3,0) node [below] {$2p+5$}--(6.3,1) ;
     \draw[-latex] (0,5)--(0.65,4.75) node
     [above]{$d_3$}--(1.3,4.5);
     \draw[-latex] (0,4.5)  --(1.3,4);
     \draw[-latex] (1.3,4.5)-- (3.15,3) node
     [above]{$d_{2p-1}$} --(5,1.5);
     \draw[-latex] (1.3,4)--(5,1);
     \draw[-latex] (5,1.5)--(5.65,1.25)
     node [above]{$d_3$}--(6.3,1);
     \draw[-latex] (5,1)--(6.3,0.5);
    \end{tikzpicture}
    \caption{Some nontrivial differentials in the spectral sequence}
    \label{fig:differentials}
\end{figure}

Inspection of degrees shows that ${^U\!E}^{3,2p+2}_{*}$ can only 
support the differential $d_{2p-1}^{3,2p+2}$
and receive the differential
$d_{3}^{0,2p+4}$, as shown in Figure
\ref{fig:differentials}.
By Lemma \ref{lem:c bar = c}, 
$${^U\!E}^{3,2p+2}_3 =\Zp \{ \overline{cx_1}| \  c\in 
S'_{p+1} \} .$$
Recall the definition \eqref{equ:bar cx} of $\overline{cx_1}$, we have
$${^U\!E}^{3,2p+2}_4 = \frac{{^U\!E}^{3,2p+2}_3 }{\opn{Im} 
{^U\!d}_3^{0,2p+4}}
= \frac{\Zp \{ c_p c_1 x_1 \} }
{\opn{Im} {^U\!d}_3^{0,2p+4} \cap \Zp \{ c_p c_1 x_1 \} }.$$

If $p^2 \nmid n-p$, then ${^U\!d}_3^{0,2p+4}
(c_{p+1}c_1 -\frac{n}{n-p-1} c_{p+2})=(n-p) c_p 
c_1 x_1$. Thus $pc_p c_1 x_1 \in \opn{Im} {^U\!d}_3^{0,2p+4} \cap \Zp \{ c_p c_1 x_1 \}$.

If $p^2 \mid n-p$, then $p^2 \nmid n$. By 
Lemma \ref{lem:three boundaries} 
and $p\nmid A_1$, we have $pc_p c_1 x_1 \in \opn{Im} {^U\!d}_3^{0,2p+4} \cap \Zp \{ c_p c_1 x_1 \}$.

Now ${^U\!E}^{3,2p+2}_{2p-1}={^U\!E}^{3,2p+2}_4 $ is a
quotient of $\Zp \{ c_p c_1 x_1 \} / p\Zp \{ c_p c_1 x_1 \} \cong  \Z/p \{ $ $ c_p c_1 x_1 \}$. 
Recall that we have shown 
${^U\!d}^{3,2p+2}_{2p-1} (c_p c_1 x_1)= \binom{n-1}
{p-1} c_1^2 y_{p,0}\neq 0$
in the proof of Lemma 3.6 in \cite{ZHANG2023108642}. Hence we must have
$${^U\!E}^{3,2p+2}_{2p-1}= \Z/p \{ c_p c_1 x_1 \} \text{ and }
 {^U\!E}^{3,2p+2}_{\infty}= {^U\!E}^{3,2p+2}_{2p}
=\opn{Ker} {^U\!d}^{3,2p+2}_{2p-1}=0.$$

Inspection of degrees shows that ${^U\!E}^{3,2p+4}_{*}$ can only 
support the differential $d_{2p-1}^{3,2p+4}$
and receive the differential
$d_{3}^{0,2p+6}$, as shown in Figure
\ref{fig:differentials}.
By Lemma \ref{lem:c bar = c}, 
$${^U\!E}^{3,2p+4}_3 =\Zp \{ \overline{cx_1}| \  c\in 
S'_{p+2} \} .$$
The definition of $\overline{cx_1}$ shows
$${^U\!E}^{3,2p+4}_4 = \frac{{^U\!E}^{3,2p+4}_3 }{\opn{Im} 
{^U\!d}_3^{0,2p+6}}
= \frac{\Zp \{ c_p c_2 x_1 , c_p c_1^2 x_1 \} }
{\opn{Im} {^U\!d}_3^{0,2p+4} \cap \Zp \{ 
c_p c_2 x_1 , c_p c_1^2 x_1 \} }.$$
By Lemma \ref{lem:three boundaries} and  
$p\nmid B_3$, $c_p c_2 x_1 \in \opn{Im} {^U\!d}_3^{0,2p+4}$. 

Note the highest element in ${^U\!E}^{3,2p+4}_3$ lower than
$c_p c_1 ^2 x_1$ is 
$c_{\frac{p+3}{2}} c_{\frac{p+1}{2}} x_1 :=C$. 
Now we claim that $\mathfrak{M}^{p+2}_C \subset \opn{Im} {^U\!d}_3^{0,2p+4}$. 

By Lemma \ref{lem:c bar = c},
$\mathfrak{M}^{p+2}_C =\overline{\mathfrak{M}^{p+2}_C}$
$=\Zp \{ \overline{cx_1}|\ c\le C, c\in 
S'_{p+2} \}$. For $c\le C$ with $c\neq c_p c_2$, $\overline{cx_1} \in \opn{Im} {^U\!d}_3^{0,2p+4}$ from the definition \eqref{equ:bar cx}. Combined with $
\overline{c_p c_2 x_1} = c_p c_2 x_1 \in \opn{Im} {^U\!d}_3^{0,2p+4}$, we have
$\mathfrak{M}^{p+2}_C =\overline{\mathfrak{M}^{p+2}_C} \subset \opn{Im} {^U\!d}_3^{0,2p+4}.$

Note $p\nmid A_1$, Lemma \ref{lem:three boundaries}
implies $pc_p c_1^2 x_1 \in \opn{Im} {^U\!d}_3^{0,2p+4}+\mathfrak{M}^{p+2}_C \subset
\opn{Im} {^U\!d}_3^{0,2p+4}$.
Hence ${^U\!E}^{3,2p+4}_{2p-1}={^U\!E}^{3,2p+4}_4 $ is a
quotient of $\Zp \{ c_p c_1^2 x_1 \} / 
p\Zp \{ c_p c_1^2 x_1 \} \cong \Z/p \{ $ $ c_p c_1^2 x_1 \}$. 

Now we claim ${^U\!E}^{3,2p+4}_{\infty}={^U\!E}^{3,2p+4}_{2p-1}=0$. It suffices to
show ${^U\!d}_{2p-1}^{3,2p+4}(c_p c_1^2 x_1)
\neq 0$.

Recall the morphism of fiber sequences $\Psi$ introduced in \eqref{eq:3_by_3_diag}, and the induced morphism $\Psi^*: {^U\! E}\to {^T\! E}$ of spectral sequences. 
For $1\le i \le n$, let $v'_i =v_i-v_{n}$. It follows from Proposition \ref{prop: d3 of T} that the $v'_i$'s are permanent cycles. Then 

\begin{equation*}
    \begin{split}
         & \Psi^*{^U\!d}_{2p-1} ^{3,2p+4}(c_p c_1^2 x_1)\\
             =\ & {^T\!d}_{2p-1}^{3,2p+4}\Psi^{*}(c_p c_1^2 x_1)\\
             =\ & {^T\!d}_{2p-1}^{3,2p+4}((\sum_{n\geq i_1 > \cdots> i_p \geq 1}
             v_{i_1}\cdots v_{i_p})
             (\Sigma_{k=1}^n v_k)^2 x_1)\\
             =\ & {^T\!d}_{2p-1}^{3,2p+4}
             ((\sum_{n\geq i_1 > \cdots> i_p \geq 1}(v'_{i_1}+v_n)\cdots (v'_{i_p}+v_n))
             (\Sigma_{k=1}^n v'_k +nv_n)^2 x_1)\\
    \end{split}
\end{equation*}

\begin{equation*}
        \begin{split}
             =\ & {^T\!d}_{2p-1}^{3,2p+4}
             (\sum_{n\geq i_1 > \cdots> i_p \geq 1}\sum_{j=0}^{p}\sigma_j(v'_{i_1},\cdots,v'_{i_p})v_n^{p-j}
             (\Sigma_{k=1}^n v'_k +nv_n)^2 x_1).\\
        \end{split}
\end{equation*}
By Proposition \ref{prop: dT vn x1}, we have
\begin{equation*}
    \begin{split}
         & \Psi^*{^U\!d}_{2p-1}^{3,2p+4}(c_p c_1^2 x_1)\\
        =\ & {^T\!d}_{2p-1}^{3,2p+4}
        (\sum_{n\geq i_1 > \cdots> i_p \geq 1} (\sigma_1 (v'_{i_1},\cdots,v'_{i_p}) (\Sigma_{k=1}^n v'_k)^2 + 2n \sigma_2 (v'_{i_1},\cdots,v'_{i_p}) (\Sigma_{k=1}^n v'_k) \\
        & + n^2 \sigma_3 (v'_{i_1},\cdots,v'_{i_p}))
        v_n^{p-1}x_1)\\
        =\ & \sum_{n\geq i_1 > \cdots> i_p \geq 1} (\sigma_1 (v'_{i_1},\cdots,v'_{i_p}) (\Sigma_{k=1}^n v'_k)^2 + 2n \sigma_2 (v'_{i_1},\cdots,v'_{i_p}) (\Sigma_{k=1}^n v'_k) \\
        & + n^2 \sigma_3 (v'_{i_1},\cdots,v'_{i_p})
        y_{p,0}.\\
    \end{split}
\end{equation*}
Since $y_{p,0}$ is $p$-torison, we have
\begin{equation*}
    \begin{split}
        \Psi^*{^U\!d}_{2p-1}^{3,2p+4}(c_p c_1^2 x_1)
        =\ & \sum_{n\geq i_1 > \cdots> i_p \geq 1} \sigma_1 (v'_{i_1},\cdots,v'_{i_p}) (\Sigma_{k=1}^n v'_k)^2 
        y_{p,0}.\\
        =\ & \binom{n-1}{p-1} (\Sigma_{k=1}^n v'_k)^3 y_{p,0}\\
        =\ & \binom{n-1}{p-1} (\Sigma_{k=1}^n v_k -nv_n)^3 y_{p,0}.\\
        =\ & \binom{n-1}{p-1} (\Sigma_{k=1}^n v_k)^3   y_{p,0}\\
        =\ & \Psi^* (\binom{n-1}{p-1} c_1^3 y_{p,0}).
    \end{split}
\end{equation*}
Recall the map  
$\Psi^{*}: {^U\!E}_2^{*,*}\to {^T\!E}_2^{*,*}$ is injective.
We also know ${^U\!E}_{2p-1}^{2p+2,6}$ is a subgroup of 
${^U\!E}_2^{2p+2,6}$. Similarly ${^T\!E}_{2p-1}^{2p+2,6}$ 
is a subgroup of ${^T\!E}_2^{2p+2,6}$. Then we have 
\begin{equation}
    \label{equ:d2p-1 cpc1^2 x1}
    {^U\!d}_{2p-1}^{3,2p+4}(c_p c_1^2 x_1)=\binom{n-1}
{p-1} c_1^3 y_{p,0}\neq 0.
\end{equation}
\end{proof}

\begin{prop}
    \label{prop:E inf 2p+5,0 2p+5,2 2p+2,6}
    ${^U\!E}^{2p+5,2}_{\infty} ={^U\!E}^{2p+2,6}_{\infty} =0,\ 
    {^U\!E}^{2p+5,0}_{\infty} =\Z/p \{ y_{p,0}x_1 \}$
    or 0.
\end{prop}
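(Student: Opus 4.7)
The plan is to analyze each of the three $E_\infty$ positions separately using the machinery of Section \ref{sec:spectral_sequence}. In each case I would identify the $E_3$ page via \eqref{eq:p local tensor form E2 of U} together with Proposition \ref{prop: p local cohomology of KZ3 below 2p+5}, and then track the surviving classes through the relevant incoming and outgoing differentials. For $d_3$ I would use Corollary \ref{cor:d3} together with the Leibniz rule; for $d_{2p-1}$ I would use the comparison with $^T\!E$ via the map $\Psi^*$, exactly as in the proof of Proposition \ref{prop:E inf 3,2p+2 3,2p+4}.

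For $(2p+5,2)$ the $E_3$ page is $\Zp/p\{c_1 x_1 y_{p,0}\}$, all outgoing differentials vanish for degree reasons, and the incoming $d_3$ from $(2p+2,4) = \Z/p\{c_2 y_{p,0}, c_1^2 y_{p,0}\}$ produces the coefficients $n-1$ and $2n$ in front of the generator $c_1 x_1 y_{p,0}$. Since $\gcd(n-1, 2n)$ divides $2$ and $p$ is odd, at least one of these is a unit mod $p$, so the generator lies in $\opn{Im}d_3$ and $E_4^{2p+5,2} = E_\infty^{2p+5,2} = 0$. For $(2p+5,0)$ the $E_3$ page is $\Z/p\{x_1 y_{p,0}\}$, no outgoing differential can be nonzero, and $d_3(c_1 y_{p,0}) = n\, x_1 y_{p,0}$ kills the generator whenever $p\nmid n$; when $p\mid n$, $d_3$ vanishes and only a long-range $d_{2p+5}$ from $(0, 2p+4)$ could plausibly kill the class, which I cannot readily control, giving the stated ambiguity $E_\infty^{2p+5,0} \in \{\Z/p, 0\}$.

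The crux is the position $(2p+2,6)$, whose $E_3$ page is $\Z/p\{c_3 y_{p,0},\, c_2 c_1 y_{p,0},\, c_1^3 y_{p,0}\}$. I would first apply $d_3$ into $(2p+5,4) = \Z/p\{c_2 x_1 y_{p,0}, c_1^2 x_1 y_{p,0}\}$; by Corollary \ref{cor:d3} and the Leibniz rule the resulting coefficient matrix (rows indexed by the target basis) is $\bigl(\begin{smallmatrix} n-2 & n & 0 \\ 0 & n-1 & 3n \end{smallmatrix}\bigr)$. When $p\mid n$ and $p>2$, both $n-1$ and $n-2$ are units mod $p$, so this matrix has rank $2$ and kernel exactly $\Z/p\{c_1^3 y_{p,0}\}$, giving $E_4^{2p+2,6} = \Z/p\{c_1^3 y_{p,0}\}$. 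I would then invoke equation \eqref{equ:d2p-1 cpc1^2 x1} from the previous proof, which reads
\[
{^U\!d}_{2p-1}^{3,2p+4}(c_p c_1^2 x_1) \;=\; \binom{n-1}{p-1}\, c_1^3 y_{p,0},
\]
and observe that when $p\mid n$, the congruence $(n-1)(n-2)\cdots(n-p+1) \equiv (p-1)! \pmod{p}$ forces $\binom{n-1}{p-1} \equiv 1 \pmod{p}$. This kills the last surviving class, yielding $E_\infty^{2p+2,6} = 0$ in the case $p\mid n$. The case $p\nmid n$ requires a more intricate case analysis (in particular when $p=3$, where $d_3(c_1^3 y_{p,0})$ vanishes identically), and can alternatively be routed through a torsion argument based on Proposition \ref{pro:n-torsion}.

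The main obstacle is the bookkeeping at $(2p+2,6)$: to apply \eqref{equ:d2p-1 cpc1^2 x1} one must verify that $c_p c_1^2 x_1$ survives in ${^U\!E}_{2p-1}^{3,2p+4}$ and that its image $\binom{n-1}{p-1}c_1^3 y_{p,0}$ remains a live generator in ${^U\!E}_{2p-1}^{2p+2,6}$ (not already annihilated at an earlier page). This requires running the spectral sequence simultaneously at both positions through pages $4$ to $2p-1$, and is the essentially new computational input beyond the proof of Proposition \ref{prop:E inf 3,2p+2 3,2p+4}.
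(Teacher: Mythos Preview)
Your approach is essentially the same as the paper's, and the computations you outline are correct. Two remarks on places where you are doing more work than necessary.

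First, the case analysis on $p\nmid n$ is superfluous. The entire paper works under the standing hypothesis $p\mid n$ (otherwise ${_p}H^*(BPU_n)=0$ by Proposition~\ref{pro:n-torsion} and there is nothing to prove), so the paper simply asserts $\ker\,{^U\!d}_3^{2p+2,6}=\Z/p\{c_1^3 y_{p,0}\}$ without further case distinction; your matrix computation, specialized to $n\equiv 0\pmod p$, gives exactly this.

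Second, the ``main obstacle'' you flag is dispatched in the paper by a single degree inspection. Since $H^s(K(\Z,3))_{(p)}=0$ for $3<s<2p+2$ (Proposition~\ref{prop: p local cohomology of KZ3 below 2p+5}), the position $(2p+2,6)$ can only support $d_3$ and receive $d_{2p-1}$; hence ${^U\!E}^{2p+2,6}_{2p-1}={^U\!E}^{2p+2,6}_4$ automatically, with no intermediate pages to track. Likewise, the survival of $c_p c_1^2 x_1$ to page $2p-1$ at $(3,2p+4)$ is already established in the proof of Proposition~\ref{prop:E inf 3,2p+2 3,2p+4}: there it is shown that ${^U\!E}^{3,2p+4}_{2p-1}$ is a quotient of $\Z/p\{c_p c_1^2 x_1\}$, and the very equation \eqref{equ:d2p-1 cpc1^2 x1} you cite forces it to be all of $\Z/p\{c_p c_1^2 x_1\}$. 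So there is no simultaneous bookkeeping required---both facts are already in hand.
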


\begin{proof}[Proof]
Clearly, the map ${^U\!d}^{2p+2,4}_3
: {^U\!E}^{2p+2,4}_3 \to {^U\!E}^{2p+5,2}_3$ is surjective, where
$${^U\!E}^{2p+2,4}_3 =\Z/p \{ c_2 y_{p,0}, 
c_1^2 y_{p,0}\},\ 
{^U\!E}^{2p+5,2}_3 =\Z/p \{ c_1 y_{p,0}x_1 \}.$$
Thus ${^U\!E}^{2p+5,2}_{\infty} ={^U\!E}^{2p+5,2}_4
=0$.

Inspection of degrees shows that ${^U\!E}^{2p+2,6}_{*}$ can only 
support the differential $d_{3}^{2p+2,6}$
and receive the differential
$d_{2p-1}^{3,2p+4}$. Immediate calculation
of ${^U\!d}^{2p+2,6}_3
: {^U\!E}^{2p+2,6}_3 \to {^U\!E}^{2p+5,4}_3$
shows ${^U\!E}^{2p+2,6}_{2p-1} = {^U\!E}^{2p+2,6}_4 = \opn{Ker} {^U\!d}^{2p+2,6}_3 =
\Z/p \{ c_1^3 y_{p,0} \}$,
where
$${^U\!E}^{2p+2,6}_3 =\Z/p \{ c_3 y_{p,0}, 
c_2c_1 y_{p,0}, c_1^3 y_{p,0} \},\ 
{^U\!E}^{2p+5,4}_3 =\Z/p \{ c_2 y_{p,0}x_1 ,
c_1^2 y_{p,0}x_1\}.$$
Now by \eqref{equ:d2p-1 cpc1^2 x1}, ${^U\!E}^{2p+2,6}_{\infty} ={^U\!E}^{2p+2,6}_{2p} =0$.

Inspection of degrees shows that ${^U\!E}^{2p+5,0}_{*}$ can't  
support any differential. Since 
${^U\!E}_2^{2p+5,0}=\Z/p \{y_{p,0}x_1\}$, ${^U\!E}^{2p+5,0}_{\infty}$
must be a quotient of ${^U\!E}_2^{2p+5,0}$, i.e. 
$\Z/p \{ y_{p,0}x_1 \}$ or $0$.

\end{proof}

\begin{proof}[Proof of Theorem \ref{thm: p>2, main thm, 2p+5,7,8 }]

The nontrivial entries in ${^U\!E}_2^{*,*}$ of
total degree $2p+5$ are ${^U\!E}_2^{3,2p+2}$ 
and ${^U\!E}_2^{2p+5,0}$.
By Proposition \ref{prop:E inf 3,2p+2 3,2p+4},
${^U\!E}_{\infty}^{3,2p+2}=0$. Hence, 
by Proposition \ref{prop:E inf 2p+5,0 2p+5,2 2p+2,6},
we have
$${_p H}^{2p+5}(BPU_n)= H^{2p+5}(BPU_n)_{(p)}={^U\!E}_{\infty}^{2p+5,0}=\Z/p 
\{ y_{p,0}x_1 \}
\text{ or } 0.$$
Then it suffices to show that ${_p H}^{2p+5}(BPU_n)\neq 0$. 
In the case $n=p$, by Theorem 3.6
of \cite{Vistoli+2007+181+227}, ${_p H}^{2p+5}(BPU_p)=\Z/p$. Thus, $y_{p,0}x_1\in {_p H}^{2p+5}(BPU_p)$ is nonzero.

As Lemma 7.2 of \cite{gu2019cohomology}, we consider 
the map $\Delta: BU_p \to BU_n$ given by the inclusion
$U_p \hookrightarrow U_n$:
\begin{equation*}
A\to 
    \begin{pmatrix}
         A & \cdots & 0 & 0\\
         0 & A & \cdots & \vdots\\
         \vdots & \cdots & \ddots & 0\\
         0 & \cdots & \cdots & A\\
    \end{pmatrix}.
\end{equation*}
This inclusion also yields the maps $PU_p \hookrightarrow PU_n$ and $\Delta': BPU_p \to BPU_n$. Now
as the proof in \cite{gu2019cohomology}, we can induce the following commutative diagram:
$$
\xymatrix{
& BU_p \ar@{->}[d]^{\Delta} \ar@{->}[r] & BPU_p
\ar@{->}[r] \ar@{->}[d]^{\Delta'} & K(\Z,3) 
\ar@{->}[d]^{=}\\
& BU_n \ar@{->}[r] & BPU_n
\ar@{->}[r]  & K(\Z,3)\\}
$$
This diagram induce a homomorphism of Serre spectral sequence whose restriction on the
bottom row of the $E_2$ pages is the identity.
Then $(\Delta')^* (y_{p,0}x_1) =y_{p,0}x_1$.
Note that the first $y_{p,0}x_1$ is in ${_p H}^{2p+5}(BPU_n)$ and the second is in
${_p H}^{2p+5}(BPU_p)$. Thus
$y_{p,0}x_1\in {_p H}^{2p+5}(BPU_n)$ is nonzero, i.e. ${_p H}^{2p+5}(BPU_n)\neq 0$.

The nontrivial entries in ${^U\!E}_2^{*,*}$ of
total degree $2p+7$ are ${^U\!E}_2^{3,2p+4}$ 
and ${^U\!E}_2^{2p+5,2}$.
By Proposition \ref{prop:E inf 3,2p+2 3,2p+4}
and Proposition \ref{prop:E inf 2p+5,0 2p+5,2 2p+2,6},
${^U\!E}_{\infty}^{3,2p+4}={^U\!E}_{\infty}^{2p+5,2}=0$. Hence, 
we have
$${_p H}^{2p+7}(BPU_n)= H^{2p+7}(BPU_n)_{(p)}=0.$$

The nontrivial entries in ${^U\!E}_2^{*,*}$ of
total degree $2p+8$ are ${^U\!E}_2^{0,2p+8}$ and ${^U\!E}_2^{2p+2,6}$.
By Proposition \ref{prop:E inf 2p+5,0 2p+5,2 2p+2,6},
${^U\!E}_{\infty}^{2p+2,6}=0$. Hence, 
$H^{2p+8}(BPU_n)_{(p)}= 
{^U\!E}_{\infty}^{0,2p+8}$ is a free $\Zp$-module, 
from which we deduce
$${_p H}^{2p+8}(BPU_n)=0.$$

\end{proof}

\subsection*{Acknowledgments}
The authors would like to thank Xing Gu
and Yu Zhang for the helpful discussions which motivated the current project. 
The authors were supported by the National Natural Science Foundation of China (No. 12001474; 12261091).  All authors contribute equally.

\bibliographystyle{plain}
\bibliography{ref}

\begin{thebibliography}{10}

\bibitem{antieau2014period}
Benjamin Antieau and Ben Williams.
\newblock The period-index problem for twisted topological {K}-theory.
\newblock {\em Geometry \& Topology}, 18(2):1115--1148, 2014.

\bibitem{antieau2014topological}
Benjamin Antieau and Ben Williams.
\newblock The topological period--index problem over $6$-complexes.
\newblock {\em Journal of Topology}, 7(3):617--640, 2014.

\bibitem{cartan19551955}
Henri Cartan and Jean-Pierre Serre.
\newblock S{\'e}minaire {H}enri {C}artan vol. 7.
\newblock pages 283--288, 1954-1955.

\bibitem{cordova2020anomalies}
Clay Cordova, Daniel Freed, Ho~Tat Lam, and Nathan Seiberg.
\newblock Anomalies in the space of coupling constants and their dynamical
  applications {II}.
\newblock {\em SciPost Physics Proceedings}, 8(1), 2020.

\bibitem{crowley2021h}
Diarmuid Crowley and Xing Gu.
\newblock On {$H^*(BPU_n;\mathbb{Z})$} and {Weyl} group invariants.
\newblock {\em arXiv preprint arXiv:2103.03523}, 2021.

\bibitem{duan}
Haibao Duan.
\newblock The cohomology and {K}-theory of the projective unitary groups
  {$PU(n)$}.
\newblock {\em arXiv preprint arXiv:1710.09222}, 2020.

\bibitem{garcia2019dai}
I{\~n}aki Garc{\'\i}a-Etxebarria and Miguel Montero.
\newblock {Dai-Freed} anomalies in particle physics.
\newblock {\em Journal of High Energy Physics}, 2019(8):3, 2019.

\bibitem{gu2019topological}
Xing Gu.
\newblock The topological period--index problem over 8-complexes, {I}.
\newblock {\em Journal of Topology}, 12(4):1368--1395, 2019.

\bibitem{gu2020topological}
Xing Gu.
\newblock The topological period-index problem over 8-complexes, {II}.
\newblock {\em Proceedings of the American Mathematical Society},
  148:4541--4545, 2020.

\bibitem{gu2019cohomology}
Xing Gu.
\newblock On the cohomology of the classifying spaces of projective unitary
  groups.
\newblock {\em Journal of Topology and Analysis}, 13(02):535--573, 2021.

\bibitem{gu_zzz}
Xing Gu, Yu~Zhang, Zhilei Zhang, and Linan Zhong.
\newblock The {$p$}-primary subgroups of the cohomology of {$BPU_n$} in
  dimensions less than {$2p+5$}.
\newblock {\em Proceedings of the American Mathematical Society},
  150(9):4099--4111, 2022.

\bibitem{Kameko2008brown}
Masaki Kameko and Nobuaki Yagita.
\newblock The {Brown-Peterson} cohomology of the classifying spaces of the
  projective unitary groups {$PU(p)$} and exceptional {Lie} groups.
\newblock {\em Transactions of the American Mathematical Society},
  360(5):2265--2284, 2008.

\bibitem{kono1975cohomology}
Akira Kono and Mamoru Mimura.
\newblock On the cohomology of the classifying spaces of {$PSU(4n+2)$} and
  {$PO(4n+2)$}.
\newblock {\em Publications of the Research Institute for Mathematical
  Sciences}, 10(3):691--720, 1975.

\bibitem{Kono_Yagita}
Akira Kono and Nobuaki Yagita.
\newblock Brown-{P}eterson and ordinary cohomology theories of classifying
  spaces for compact {L}ie groups.
\newblock {\em Transactions of the American Mathematical Society},
  339(2):781--798, 1993.

\bibitem{tamanoi1999subalgebras}
Hirotaka Tamanoi.
\newblock {$Q$}-subalgebras, {Milnor} basis, and cohomology of
  {Eilenberg-MacLane} spaces.
\newblock {\em Journal of Pure and Applied Algebra}, 137(2):153--198, 1999.

\bibitem{toda1987cohomology}
Hiroshi Toda et~al.
\newblock Cohomology of classifying spaces.
\newblock In {\em Homotopy theory and related topics}, pages 75--108.
  Mathematical Society of Japan, 1987.

\bibitem{vavpetivc2005mod}
Ale{\v{s}} Vavpeti{\v{c}} and Antonio Viruel.
\newblock On the mod $p$ cohomology of {$BPU(p)$}.
\newblock {\em Transactions of the American Mathematical Society}, pages
  4517--4532, 2005.

\bibitem{Vistoli+2007+181+227}
Angelo Vistoli.
\newblock On the cohomology and the chow ring of the classifying space of
  {$PGL_p$}.
\newblock {\em Journal für die reine und angewandte Mathematik},
  2007(610):181--227, 2007.

\bibitem{ZHANG2023108642}
Yu~Zhang, Zhilei Zhang, and Linan Zhong.
\newblock The $p$-primary subgroup of the cohomology of {$BPU_n$} in dimension
  {$2p+6$}.
\newblock {\em Topology and its Applications}, 338:108642, 2023.

\end{thebibliography}

\end{document}